\documentclass[a4paper, 11pt]{article}
\usepackage{amsmath,amsfonts,amssymb,amscd,amsthm,amsbsy}



\usepackage{url}
\usepackage{soul}
\usepackage{amsthm}
\usepackage{authblk}
\usepackage{amssymb}
\usepackage{amsfonts}
\usepackage{graphicx}
\usepackage{verbatim}
\usepackage{mathrsfs}
\usepackage{subfigure}
\usepackage{fancyhdr}
\usepackage{latexsym}
\usepackage{graphicx}
\usepackage{dsfont}
\usepackage{color}
\definecolor{shadecolor}{rgb}{1, 0.8, 0.3}
\usepackage{algorithm}
\usepackage{algpseudocode}
\usepackage{rotating}
\usepackage {blindtext}
\usepackage{geometry}
\usepackage{fancyhdr}
\usepackage{amsmath , amsthm , amssymb}
\usepackage{amsfonts}
\usepackage{graphicx}
\usepackage{enumitem}
\normalfont
\usepackage[T1]{fontenc}
\usepackage{relsize}
\usepackage[utf8]{inputenc}
\usepackage[english]{babel}
\usepackage{fancyhdr}
\usepackage{enumitem,graphicx,appendix}
\usepackage{amsmath,amssymb,amsthm}
\usepackage{xargs}
\usepackage{marginnote,color}


\usepackage{hyperref}
\hypersetup{
    colorlinks=true,
    linkcolor=blue,
    citecolor=blue,
    filecolor=magenta,      
    urlcolor=cyan
}



\theoremstyle{plain}
\newtheorem{theorem}{Theorem}[section]

\newtheorem{lemma}[theorem]{Lemma}

\newtheorem{remark}[theorem]{Remark}

\theoremstyle{definition}

\newcommand{\R}{\mathbb R}
\newcommand{\Z}{\mathbb Z}

\title{Weighted Birkhoff Averages \\
and the Parameterization Method 
}
\author{David Blessing \thanks{
Email: {\tt dblessing2014@fau.edu}} }
\author{J.D. Mireles James \thanks{
Email: {\tt jmirelesjames@fau.edu}}}
\affil{Florida Atlantic University, Department of Mathematical Sciences}

\begin{document}

\maketitle

\abstract{
This work provides a systematic recipe for  
computing accurate high order Fourier expansions of
quasiperiodic invariant circles (and systems of such circles)
in area preserving maps.  The recipe requires only a finite data set sampled
from the quasiperiodic circle.  Our approach, being based on 
the parameterization method of \cite{MR2299977,MR2240743,MR2289544},
uses a Newton scheme to iteratively solve a conjugacy equation describing
the invariant circle (or systems of circles).  A critical step in properly formulating 
the conjugacy equation is to determine the rotation number of the quasiperiodic 
subsystem.  For this we exploit the the weighted Birkhoff averaging method  
of \cite{MR3718733,MR3526529,MR3755876}.  This approach facilities 
accurate computation of the rotation number given nothing but 
the already mentioned orbit data.  

The weighted Birkhoff averages also facilitate the  
computation of other integral observables like 
Fourier coefficients of the parameterization of the invariant
circle.  Since the parameterization method is based on a Newton scheme, 
we only need to approximate a small number of Fourier coefficients 
with low accuracy (say, a few correct digits) to find a good enough initial
approximation so that Newton converges.  Moreover, the Fourier coefficients
may be computed independently, so we can sample the higher modes to guess 
the decay rate of the Fourier coefficients.  This allows us to choose, a-priori, 
an appropriate number of modes in the truncation.

We illustrate the utility of the approach for explicit example systems
including the area preserving Henon map 
and the standard map (polynomial and trigonometric nonlinearity respectively).
We present example computations for (systems of) invariant circles
with period as low as 1 and up to more than 100.  We also employ 
a numerical continuation scheme
(where the rotation number is the continuation parameter)
to compute large numbers of quasiperiodic circles in these systems.  
During the continuation we monitor the 
Sobolev norm of the Parameterization, as explained in \cite{MR2672635}, 
to automatically detect the breakdown of the family.  
}


\section{Introduction}
\label{sec:intro}


Suppose that $\Gamma$ is an invariant torus of a discrete or continuous 
time dynamical system.  We say that $\Gamma$ is a rotational invariant 
torus if the dynamical on $\Gamma$ are are topologically conjugate to 
independent irrational rotations.  A quasiperiodic orbit is any orbit on 
a rotational invariant torus and, since the rotations are independent, 
all such orbits are dense in the torus.  

Cantor families of invariant tori are common in structure preserving 
dynamical systems like reversible maps, 
area and volume preserving maps on manifolds, 
and also for higher dimensional generalizations to symplectic maps
on (even dimensional) symplectic manifolds.
Indeed, for such systems typical orbits are observed to 
be either chaotic or quasiperiodic.  Given a long enough finite orbit 
segment sampled from an invariant torus, an important problem is
to be able to rapidly and accurately approximate a parameterization 
of the invariant torus.  

Two powerful approaches for solving this problem are given by the 
Parameterization method, and the method of exponentially weighted 
Birkhoff sums.   The Parameterization method is a functional 
analytic framework for studying invariant manifolds on which the 
dynamics are conjugate to a known simple model, and was 
developed in detail for invariant tori (and their stable/unstable manifolds)
in the three papers \cite{MR2289544,MR2240743,MR2299977}.
This approach is discussed in detail in 
Section \ref{sec:parmMethod}, where a number of additional references
are given.  At the moment we simply stress that the idea
of the parameterization method is to develop 
Newton schemes for solving the conjugacy equation describing 
the unknown parameterization of the invariant torus (or other invariant manifold).  

When working in a non-perturbative setting, 
two challenges are to (i) determine the
rotation number of the desired invariant circle, and (ii) to
 produce an accurate enough initial condition
so that the Newton method converges.  
Another important question is to choose an appropriate 
truncation dimension for the desired parameterization
(number of Fourier modes with which to compute).

The approach proposed here uses the weighted Birkhoff averages developed in 
\cite{MR3755876,MR3526529,MR3718733,MR4104977,MR4322369}
to efficiently obtain this information directly from data (a long enough orbit segment).
By combining the Parameterization Method with the 
weighted Birkhoff averages just mentioned, we obtain a general 
and non-perturbative procedure which allows us to compute the desired Fourier 
 expansion accurately and to high order.  
 Since the method is iterative, the coefficients can typically be 
 computed to machine precision.  
 Moreover, since the parameterization method is based on 
 solving a functional equation, it comes equipped with
  a natural notion of a-posteriori error.

 We remark that a great many previous studies deal with numerical methods 
 for computing invariant circles/tori in area preserving/symplectic maps and 
 Hamiltonian systems.  While a thorough review of the literature is beyond the 
 scope of the present work, we refer the interested reader to the papers of 
 \cite{MR2240743,MR1655359,
 MR2672635,MR2241302,MR2507323,MR1210989,
 MR1862806,MR3713932,MR2967458,MR4263036,
 MR3477312,MR4344967,MR3388608}
and the references cited therein.  
 A much more complete survey of the literature is found in 
 \cite{mamotreto}.  We remark that by now numerical calculations
 of quasiperiodic circles can be combined with a-posteriori analysis
(based on Nash-Moser implicit function theory) to obtain mathematically rigorous 
computer assisted proofs \cite{alexCAPKAM}.
 Several additional comments further put the present work into perspective.

 \begin{remark}[Generality] \label{rem:generality} 
 {\em
 Since both the method of weighted averages and the Parmaeterization Method
 generalize to higher dimensional tori for (symplectic) maps in higher dimensions
 -- and even to invariant tori for Hamiltonian systems -- our whole approach 
 generalizes as well.  Nevertheless, we focus on the case of invariant circles to minimize 
technical complications (multivariable Fourier series, rotation vectors, 
Parameterization Method for vector fields, et cetera).  
 }
 \end{remark}

 \begin{remark}[The introduction of a global unfolding parameter] \label{rem:unfolding}
 {\em
  Since any rotation of an invariant circle
  is again invariant, the conjugacy equation defining a parameterization has always a 
  one dimensional family of solutions.  Because of this, the parameterization method 
  for invariant circles is generally degenerate (i.e. there is not a 
  unique parameterization).  
  Of course this is the same non-uniqueness 
  found in the functional analytic set up for periodic orbits for vector fields, and the same 
  solution works: namely, we impose a Poincare type phase condition.  Appending a scalar
  constraint however results in more equations 
  than unknowns.  If the system were dissipative, so that invariant circles are isolated
   in phase space, we would treat the rotation number as a new unknown to rebalance the 
   system.  This does not work for the area preserving maps studied in the present work, 
   as solutions are expected to occur in Cantor sets, and are hence not isolated in phase space.  
   
    In previous works this problems is solved by 
   ``unfolding'' the linearized equations during the Newton iteration.  This requires 
   an infinite sequence of unfolding parameters, one at each step, and a separate
   argument is required to show that the unfolding parameters accumulate to zero.
   In the present work we we introduce a more global unfolding parameter for the 
   parameterization method, which balances the system on the level of the 
   full nonlinear functional equation. The idea is geometric and utilizes the 
   area preservation in a simple way.
   }
  \end{remark}

  \begin{remark}[Use of composition free parameterization of periodic systems of invariant circles] \label{rem:multipleShooting}
  {\em
 We generalize the parameterization method for invariant circles so that it applies to 
 invariant sets consisting of $k$ disjoint circles.  Each orbit in such a set
 visits each of the circles in some order, and each orbit
 is dense in the collection of circles.   We 
 develop a functional analytic multiple shooting scheme which leads to a system 
 of coupled equations in Fourier space describing the collection of circles.
 Our approach is inspired by the multiple shooting parameterization method
 developed in \cite{MR3706909} for studying stable/unstable manifolds attached 
 to periodic orbits of maps.  
 The main advantage these approaches is that they ``unwarp'' function compositions,
 and the nonlinearity of the resulting functional equations is no more complicated 
 than that of the original map.
 }
 \end{remark}
 
%
 
The remainder of the paper is organized as follows.  
In Section \ref{sec:background} we review some basic facts about 
invariant circles/rotation numbers, as well as results on weighted Birkhoff
averages and the parameterization method.  
In Section \ref{sec:numericalRecipe} we outline our 
numerical recipe, and Section \ref{sec:examples}
deals with numerical examples.  Section \ref{sec:continuation}
shows how these ideas can be combined with numerical continuation 
to compute families of invariant tori up to the point of breakdown.
Section \ref{sec:conclusions} summarizes the paper.

\section{Invariant circles: weighted averages and the parameterization method} 
\label{sec:background}
In this section we review material pertaining to invariant circles
which weighted averages, and the parameterization method, 
which --while standard-- is not to the best of our knowledge 
collected together in one existing reference.  We suggest 
that reader rapidly skim Section \ref{sec:background} before 
jumping ahead to Section \ref{sec:numericalRecipe} --
referring back to the present section only as needed.

\subsection{Homeomorphisms of the circle and their rotation number} \label{sec:circleMaps}  
Let $T \colon \mathbb{S}^1 \to \mathbb{S}^1$ be a homeomorphism of the circle and 
let $\pi \colon \mathbb{R} \to \mathbb{S}^1$ denote the canonical covering map 
defined by 
\[
\pi(x) = x \, \mbox{mod} 1,
\]
mapping a real number $x$ into $[0, 1)$, by discarding the integer part.
 We interpret $\theta \in [0,1)$ as the angle 
describing a point on the unit circle.  Note that $\pi(x + m) = \pi(x)$ for all $x \in \mathbb{R}$.

For $\rho \in [0, 1)$, define 
$R_\rho \colon \mathbb{S}^1  \to \mathbb{S}^1$ by 
\[
R_\rho(\theta) = \theta + \rho \, (\mbox{mod} 1).
\]
We say that a homeomorphism $T \colon \mathbb{S}^1 \to \mathbb{S}^1$ is 
topologically conjugate to the rotation $R_\rho$ if there exists a homeomorphism 
$h \colon \mathbb{S}^1 \to \mathbb{S}^1$ so that 
\[
T(h(\theta)) = h(R(\theta)), 
\]
for all $\theta \in [0,1) = \mathbb{S}^1$.  If $\rho$ is irrational, 
we say that $T$ is conjugate to irrational rotation.

A continuous map $G \colon \mathbb{R} \to \mathbb{R}$ is a \textit{lift} of $T$
if  
\[
(\pi \circ G)(\theta) = (T \circ \pi)(\theta),
\]
for all $\theta \in [0,1) = \mathcal{S}^1$.  It can be shown that every 
continuous map of the circle has a lift, and that $G$ is a lift
of a continuous circle map if and only if there is a $\bar m \in \mathbb{Z}$
such that 
\[
G(x+1) = G(x) + \bar m, 
\]
for all $x \in \mathbb{R}$.  
It follows that 
\[
G(x + m) = G(x) + \bar m \cdot m,
\]
for all $x \in \mathbb{R}$ and every $m \in \mathbb{Z}$.

The rotation number of the homeomorphism 
$T$ is defined by 
\[
\rho = \rho(T) = \lim_{n \to \infty} \frac{G^n(x) - x}{n}, 
\]
where $G$ is a lift of $T$.
It is a classical result (due to Poinca\'{r}e) that the rotation number exists, 
and is independent of both the base point $x$ and the lift $G$.   
 Indeed, it can be shown that $\rho$ is invariant under
continuous change of coordinates (homeomorphism).  
That is, the rotation number is a topological invariant of the
map $T$.

The rotation number has dynamical significance.  For example,
if $\rho(T)$ is a rational number, so that $\rho = p/q$ for some
$p \in \mathbb{Z}$ and $q \in \mathbb{N}$, then $T$
has an orbit of period $q$. 
We focus on the case were $\rho$ is irrational, in which case 
 the Denjoy theorem states the following: if 
$T$ is at least $C^2$, then then $T$ is topologically conjugate to 
the rotation map $R_\rho$.  In this case, it is clear that 
every orbit of $T$ is dense in the circle. 
More detailed discussion of circle maps is found in 
Chapter 2 of \cite{MR1792240} or Chapter 1.2 of 
\cite{MR1326374}.

Note that the rotation number can be computed by averaging 
angles as follows.  
Choose $\theta_0 \in [0, 1) = \mathbb{S}^1$ and define the length 
$N$ orbit segment for $\theta_0$ under $T$ by 
\[
\theta_k = T^k(\theta_0), \quad \quad \quad \mbox{for}  \quad k = 0, \ldots, N.
\]
Using the properties of the covering map, and adding and subtracting 
along the orbit of $\theta_0$, we have that 
\begin{equation} \label{eq:rotNumAvg}
\rho = \lim_{N \to \infty} \frac{1}{N} \sum_{n=0}^{N-1} \left(\theta_{n+1} - \theta_n \right),
\end{equation}
where the \textit{positive difference} of 
two points $\theta, \sigma \in [0, 1) = \mathbb{S}^1$, is 
defined to be
\begin{equation} \label{eq:circleDiff}
\theta - \sigma = \mbox{min} \left(
|\theta - \sigma|, |1 + \sigma - \theta| 
\right).
\end{equation}

\subsection{Weighted Birkhoff averages and the rotation number}
\label{sec:rotationNumbers}
The rotation number of a circle map
can be written as an average via Equation \eqref{eq:rotNumAvg}, 
and Ergodic theory is the branch of dynamical systems theory
dealing with averages.  We review some basic 
convergence results from Ergodic theory.  

Let $(X, \Sigma, \mu)$ be a measure space  with $\mu(X) = 1$.
The self map $T \colon X \to X$ is a measure preserving 
transformation of $X$ if $T$ is a measurable function with 
$\mu(T^{-1}(A)) = \mu(A)$ for all $A \in \Sigma$.
The transformation $T$ is \textit{ergodic} if for every $A \in \Sigma$ having
$T^{-1}(A) = A$, it is the case that either $\mu(A) = 0$ or $\mu(A) = 1$.  
Ergodicity is invariant under homeomorphism, 
in the sense that if $T \colon \mathbb{S}^1 \to \mathbb{S}^1$ is ergodic
and $h  \colon \mathbb{S}^1 \to \mathbb{S}^1$ is a homeomorphism, then 
$T \circ h$ ergodic.

As an example, it is straightforward to show that if $\rho \in [0, 1)$ is irrational, then 
the circle rotation $R_\rho$ is ergodic with respect to Lebesgue 
measure on the circle.  It follows that any circle map topologically conjugate
an irrational rotation is ergodic.

An \textit{observable} on a $X$ is measurable, real (or complex) valued function 
on $X$.  Let $L^1(X, \mu)$ denote that set of all $\mu$-integrable functions
from $X$ to $\mathbb{R}$ (or $\mathbb{C}$).  That is,
the set of all integrable observables.  
For any $f \in L^1(X, \mu)$,   
the Birkhoff ergodic theorem states that 
if $T \colon X \to X$ is ergodic, then 
\begin{equation} \label{eq:ergodicTheorem}
\lim_{n \to \infty} \frac{1}{n} \sum_{k = 0}^{n-1} f(T^k(x)) = \int_X f \, d \mu, 
\end{equation}
for $\mu$-almost ever $x \in X$ \cite{MR1326374}.
That is, the time average of 
the observable $f$ along the $T$-orbit of almost any point $x$, is equal to the 
spatial average of the function $f$ over $X$.   The sum on the left is referred to 
as the Birkhoff average of $f$.  

We are interested in the case when
 $X = \mathbb{S}^1$ and $\mu$ is Lebesgue measure 
on the circle.  Consider an orientation preserving homeomorphism 
$T \colon \mathbb{S}^1 \to \mathbb{S}^1$
(which is measurable by virtue of being a continuous map), and 
suppose that $\rho(T)$ is irrational.  
Define the observable
$\tau \colon \mathbb{S}^1 \to \mathbb{R}$ to be the map
that includes $\theta \in [0, 1) = \mathbb{S}^1$ into the real numbers, 
and the observable $f \colon \mathbb{S}^1 \to \mathbb{R}$ by 
\[
f(\theta) = \tau(T(\theta) -\theta). 
\]
Noting that $f \in L^1(\mathbb{S}^1, \mu)$ we have, by the Birkhoff 
ergodic theorem, that  
\begin{equation} \label{eq:ergodicRotationNum}
\rho(f) = \lim_{n \to \infty} \frac{1}{n} \sum_{k = 0}^{n-1} f(T^k(\theta_0))
= \int_{\mathbb{S}^1} f  \, d \mu, \quad \quad \quad 
\mbox{for almost all } \theta_0 \in \mathbb{S}.
\end{equation}

The utility of the formula given in Equation \eqref{eq:ergodicRotationNum}
is limited in applications by the fact that the sum
suffers from slow (linear) convergence properties.  That is, there 
exists $C > 0$ so that 
\[
\left| \rho(f) - \frac{1}{N} \sum_{k = 0}^{N-1} f(T^k(\theta)) \right|
\leq \frac{C}{N}.
\]
This can be seen by noting that, when $f$ is ergodic, the average in the middle
of Equation \eqref{eq:ergodicRotationNum} is a uniform discretization of the 
integral on the right. 
Then, for example, if we desire fifteen correct digits in the approximation of 
the rotation number, we require approximately $N = 10^{15}$ iterations
of the map $T$.  In addition to being time prohibitive, such a calculation
is numerically unstable due to round off errors.  

In \cite{MR3755876,MR3718733,MR3526529}, the authors show that
if $\rho$ is Diophantine and $f$ is $C^\infty$,
then a much faster convergence rate obtained by taking appropriate 
weighted sums in the Birkhoff averages.
To state the result, define the weights
\begin{align*}
\hat{w}_{n,N} = \dfrac{w\left(\frac{n}{N}\right)}{\sum_{j=0}^{N}w\left(\frac{j}{N}\right)}
\end{align*}
where $w(t) = \exp\left(-1/(t(1-t))\right)$. 
The weighted Birkhoff average is defined by  
\begin{align*}
WB_N(T, f)(\theta) = \sum_{n=0}^{N-1}\hat{w}_{n,N} f(T^n(\theta)).
\end{align*}
Heuristically, this scheme weights more heavily the ``typical'' terms in the middle
of the sequence, avoiding ``boundary effects'' due 
the fact that we average only a finite orbit segment.
This is related to choosing a "good convolution kernel" in the integral
on the right hand side of the ergodic theorem (Equation \eqref{eq:ergodicRotationNum})
\cite{MR3526529,MR3718733,MR3755876}.

The qualitative comments above are made precise in 
in \cite{MR3718733},
and it is shown that 
$WB_N(T,f)(x)$ converges faster than any polynomial,
provided that $\rho(T)$ is ``irrational enough''.  More precisely, we
say that $\rho \in [0, 1)$ is Diophantine if there exist $C, \tau > 0$ so 
that 
\[
|n \rho - m |  \geq \frac{C}{n^{1 + \tau}}, \quad \quad \quad
\mbox{for all } m, n \in \mathbb{N}, n \neq 0.
\] 
This make precise the notion that $\rho$ is not well approximated 
by any rational number.  The main result of   \cite{MR3718733} is that 
if $T$ and $f$ are $C^\infty$, and 
$\rho$ is Diophantine, then for each $M \in \mathbb{N}$ there is a 
$C_M > 0$ so that 
\begin{equation}
\left| \int_{\mathbb{S}} f \, d \mu  -  
WB_N(T, f)(\theta) \right| \leq \frac{C_M}{N^M}.
\end{equation}
Moreover, the convergence is uniform in $\theta$.
Then, in this case, the average converges faster than any polynomial.

%

\subsection{Invariant circles for area preserving maps} \label{sec:rotNumMaps}
As an application of the smooth ergodic theory discussed in Section \ref{sec:rotationNumbers}, 
we return to the main problem of the paper: 
computing invariant circles for planar dynamical systems.  
To begin making things precise, 
let $\Omega \subset \mathbb{R}^2$ be an open subset of the plane and suppose that 
$F \colon \Omega \to \Omega$ is a smooth, orientation preserving
 diffeomorphism.   Suppose that $\Gamma \subset \Omega$ is a 
$C^\infty$ simple closed invariant curve for $F$, so that 
\[
F(\Gamma) = \Gamma, 
\]
with equality in the sense of sets.   

Restricting $F$ to $\Gamma$ defines a smooth and orientation 
preserving homeomorphism of the circle, which we denote by 
$T \colon \mathbb{S}^1 \to \mathbb{S}^1$.  Since $F$ and $\Gamma$ 
are smooth, so is $T$.  
Following \cite{MR3718733,MR3526529,MR3755876}
we are interested 
 the case where $T$ is conjugate to an irrational rotation.  
To signify the importance of this case we make the following definition: 
we say that $\Gamma$ is a \textit{quasi-periodic invariant circle} for $F$
if the dynamics generated by $F$ restricted to $\Gamma$ -- that is 
the dynamics of $T$ --
are topologically conjugate to an irrational rotation.  
For a given quasi-periodic invariant circle $\Gamma$, we are interested
in determining the rotation number 
of $T$, from finite data for iterates of $F$.

To this end, 
choose $N \in \mathbb{N}$ and 
suppose then that $p_0 \in \Gamma \subset \Omega$.
Define the orbit sequence of length $N$ recursively by 
\begin{equation} \label{eq:def_pj}
p_j = F(p_{j-1}), 
\end{equation} 
for $j = 1, 2, 3, \ldots, N$.  We write 
\[
\mbox{orbit}_{N,F}(p_0) = \left\{ p_j \right\}_{j = 0}^N,
\]
to denote this set.  We convert this to angular data on the circle
as follows.   Let $q_0 \in \Omega$ denote a point inside the 
curve $\Gamma$, and compute the vectors 
\begin{equation} \label{eq:def_pj}
\left(
\begin{array}{c}
x_j \\
y_j
\end{array}
\right) = 
\xi_j = p_j - q_0.
\end{equation}
Define 
\[
\theta_j = \frac{\mbox{atan4}(y_j, x_j) }{2 \pi},
\]
for $j = 0, 1, \ldots, N$.  Here $\mbox{atan4}$ is the four 
quadrant arctangent function which returns the angle 
between $\xi_j$ and the $x$-axis, with the angle 
taken between $0$ and $2 \pi$.  This gives an 
explicit projection of the 
dynamics into $\mathbb{S}^1$.  
Applying the formula developed in Equation \eqref{eq:ergodicRotationNum}, 
we have that 
\[
\rho_N = \sum_{n = 0}^{N-1} \hat{w}_{n,N} (\theta_{n+1} - \theta_n),
\]
rapidly converges to $\rho$, the rotation number of $T$,   
(Again, subtraction for points on the circle is as defined in Equation 
\eqref{eq:circleDiff}).

\begin{remark}[Rotation number as a chaotic/quasiperiodic indicator] \label{rem:test}
{\em
It is important to note that in application problems, we do not actually
know how to choose a $p_0$ on an invariant circle $\Gamma$.
Rather this is, in practice, the problem we are trying to solve.
How then do we decide when an orbit segment is sampled
 from a quasi-periodic invariant circle? A simple answer (which is surprisingly 
 useful in practice) is to examine plots of orbit segments of length $N$,
 for a number of different values of $N$.  Then, one checks visually
 if the plotted orbits appear to densely fill a simple closed curve.

A more sophisticated approach is considered in 
  \cite{MR4104977,MR4322369}, and we 
sketch the idea here.  Consider a point $p_0 \in \Omega \subset \mathbb{R}^2$,
choose an increasing finite sequence of natural numbers 
$0 < N_1 < N_2 < \ldots < N_K$, define the orbit segment 
$\mbox{orbit}_{N_K, F}(p_0)$, the projected angles $\theta_0, \ldots, \theta_{N_K}$,
and compute the approximate rotation numbers
\[
\rho_{N_j} = \sum_{n = 0}^{N_j} \hat{w}_{n,N}(\theta_{n+1} - \theta_n),
\]
for $j = 1, 2, \ldots, K$.  If the $\rho_{N_j}$ converge numerically, this 
provides strong evidence that $p_0$, and hence
the points in $\mbox{orbit}_{N_K, F}(p_0)$, are sampled from a 
quasi-periodic invariant circle $\Gamma$.  If on the other hand the 
sequence $\rho_{N_1}, \rho_{N_2}, \ldots, \rho_{N_K}$ oscillates
randomly, then the orbit of $p_0$ is more likely sampled from 
a stochastic zone rather than a quasi-periodic orbit.  
}
\end{remark}

\begin{remark}[Elliptic equilibria and KAM phenomena] \label{rem:KAM}
{\em
A common mechanism which gives rise to invariant circles
is the KAM scenario for an elliptic
fixed point.   To formalize the discussion, let
$F \colon \mathbb{R}^2 \to \mathbb{R}^2$ be 
an orientation preserving, $C^2$ diffeomorphism of the plane, and suppose that  
$p \in \mathbb{R}^2$ is an elliptic fixed point of $F$.   That is, we assume
that $F(p) = p$, and that the eigenvalues of $DF(p)$, 
$\lambda_{1,2} = e^{\pm i \rho }$, are on the unit circle.
If $\rho$ is irrational, then the linearized dynamics at $p$ 
consist of concentric invariant circles, on which orbits are 
dense.  The dynamics in a small neighborhood of $p$ can be 
analyzed as nonlinear perturbation of the linear map $DF(p)$. 
The main question of KAM theory in this context is:
which if any of the invariant circles survive the perturbation? 

The answer depends on the number theoretic properties 
-- more precisely the Diophantine properties --   
of $\rho$, and on some nonlinear non-degeneracy, 
or twist conditions on the higher derivatives of $F$ at $p$. 
(Recall that the Diophantine constants measure ``how irrational'' 
a real number is).  Heuristically speaking, the typical situation is 
that a Cantor set of invariant circles survives.  
Moreover, a similar picture, in the neighborhood of an elliptic 
periodic $K$ orbit, gives rise to period $K$ systems of invariant 
circles.  From the point of 
view of the present paper, the main observation is that invariant 
circles with irrational dynamics are natural in area preserving
maps.  An excellent reference is 
\cite{DLL01}.
}
\end{remark}

\subsubsection{Weighted Birkhoff averages and the Fourier coefficients of the embedding}  
\label{sec:weightsFourier}

Suppose that $\Gamma$ is a quasi-periodic invariant circle for the 
diffeormophism $F \colon \Omega \to \Omega$.  
Another application of the smooth ergodic theory discussed in Section 
\ref{sec:rotationNumbers} is to compute the Fourier coefficients of
a lift/parameterization for $\Gamma$.

To be precise, we seek a period one
function $K \colon \mathbb{R} \to \mathbb{R}^2$
so that $\mbox{image}(K) = \Gamma$, with $\Gamma$
quasi-periodic.  Indeed, since the dynamics on $\Gamma$
are conjugate to $R_\rho$ (with $\rho$ the rotation number 
for $\Gamma$) we look for the conjugating map $K$.
That is, we require that 
\begin{equation} \label{eq:conjEq}
F(K(\theta)) = K(\theta + \rho), 
\end{equation}
and to fix the phase of $K$ we impose $K(0) = p_0$.  
The geometric meaning of Equation \eqref{eq:conjEq} 
is illustrated in Figure \ref{fig:conj}.

\begin{figure}[ht!]
\centering
\includegraphics[scale=0.35]{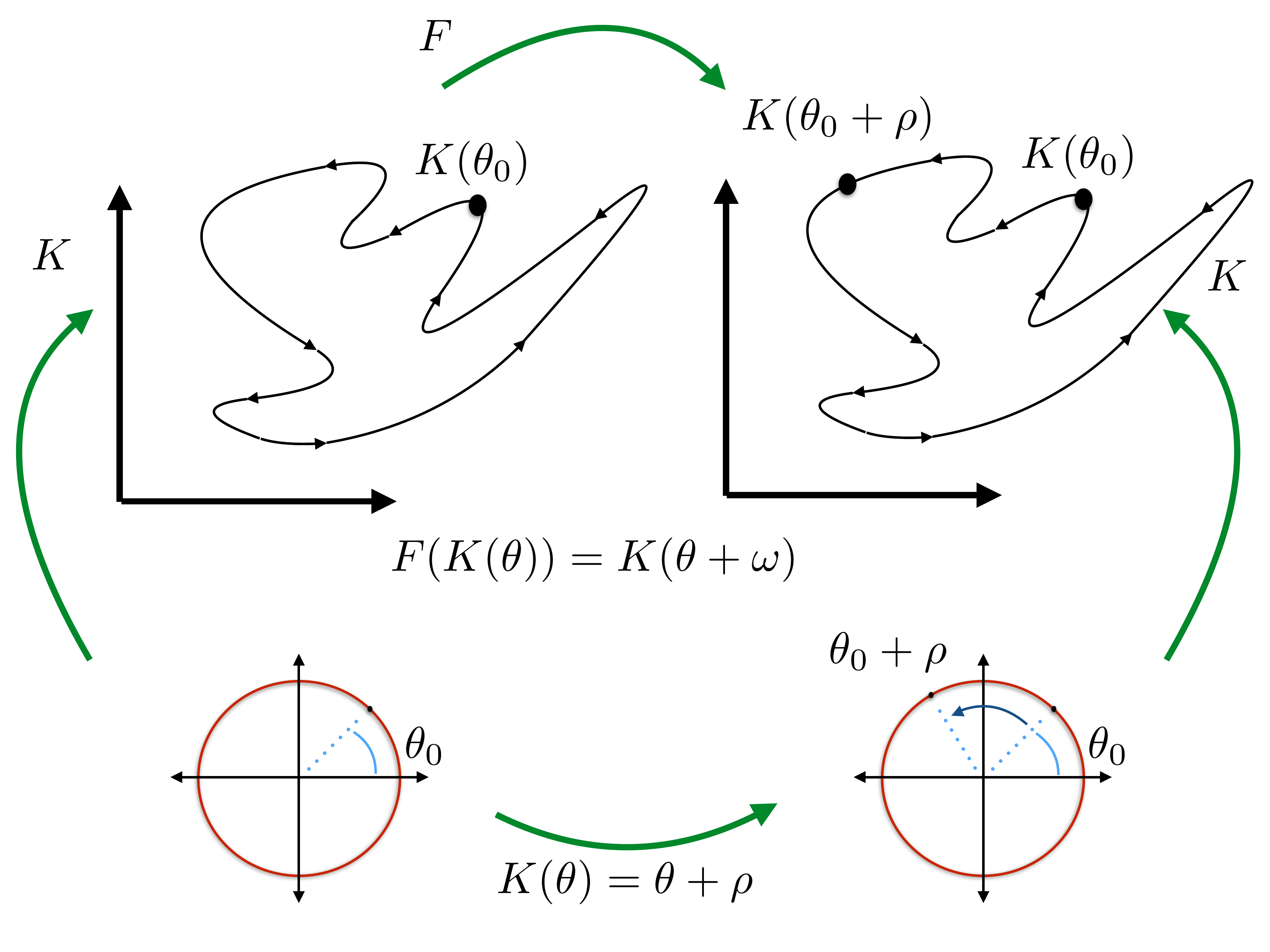} 
\caption{Topological conjugacy to rotation: 
  Here $K \colon \mathbb{S} \to \mathbb{R}^2$ is an 
embedding of the circle.  The image of $K$ is invariant under $F$, 
in the sense that $F \circ K$ is a reparameterization of the curve $K$.
In fact, the reparameterization is rotation by an angle $\rho$.
The invariance equation $F \circ K = K \circ f$ expresses the 
fact that the above diagram commutes, 
meaning that the dynamics on $K$
generated by $F$ are conjugate to the dynamics on the circle generated
by $f(\theta) = \theta + \rho$. } \label{fig:conj}
\end{figure}

Since $\Gamma$ is a smooth curve, the map $K$ 
is smooth and has convergent Fourier series which we 
denote by 
\begin{align*}
K(\theta) = \begin{pmatrix}
K^1(\theta) \\
K^2(\theta)
\end{pmatrix} = 
\begin{pmatrix}
\sum_{n\in\Z} a_n e^{2\pi i n \theta} \\
\sum_{n\in\Z} b_n e^{2\pi i n \theta}
\end{pmatrix}
=\sum_{n\in\Z} 
\begin{pmatrix}
a_n \\
b_n
\end{pmatrix}
 e^{2\pi i n \theta}
=\sum_{n\in\Z} k_n e^{2\pi i n \theta}
\end{align*}
where
\begin{align*}
k_n & = \int_0^1 K(\theta) e^{-2\pi i n \theta} \, d\theta.
\end{align*}

The idea is to treat each Fourier coefficient as a 2-vector of observables
for the underlying circle map $T$.  This can be done, exploiting the 
fact that Fourier coefficients are defined in terms of integrals and 
applying the weighted Birkhoff averages of \cite{MR3526529}.
Inductively applying Equation \eqref{eq:conjEq},
we have that
\[
p_k = F^k(p_0)  = F^k(K(\theta_0)) = K(\theta_k),
\] 
with 
\[
\theta_k = \theta_0 + k \rho,
\]
for $k = 0,1,2, \ldots, N$.  
Then 
\begin{align*}
k_n & = \int_0^1 K(\theta) e^{-2\pi i n \theta} \, d\theta \\
&= \lim_{N \to \infty}\sum_{k=0}^{N-1} \hat{w}_{k,N} K(\theta_k) e^{-2\pi i n \theta_k} \\
&= \lim_{N \to \infty}\sum_{k=0}^{N-1} \hat{w}_{k,N} K(\theta_0 + k \rho) e^{-2\pi i n (\theta_0 + k \rho)} \\
&= \lim_{N \to \infty}\sum_{k=0}^{N-1} \hat{w}_{k,N} F^k(K(\theta_0)) e^{-2\pi i n (\theta_0 + k \rho)} \\
 & =  \lim_{N \to \infty}\sum_{k=0}^{N-1} \hat{w}_{k,N} p_k e^{-2\pi i n (\theta_0 + k \rho)} \\
   & =  \lim_{N \to \infty} e^{-2\pi i n \theta_0}   \sum_{k=0}^{N-1} \hat{w}_{k,N} p_k e^{-2\pi i n k \rho} 
\end{align*}
Then
\begin{align*}
k_n \approx e^{-2\pi i n \theta_0}  \sum_{k = 0}^{N-1} \hat{w}_{k,N} p_k e^{-2\pi i nk \rho}. 
\end{align*}

The major sources of error in this approximation of the Fourier coefficient $k_n$ are threefold.
First, the limit as $N \to \infty$ is approximated by computing a finite, rather than an infinite sum.   
Second, there is the error from the approximated rotation number used to compute the coefficients, 
that is we use $\rho_N$ for some high enough $N$ to approximation $\rho$.  
Third, the trajectory $\{p_n\}_{n=0}^N$ is only near a quasiperiodic orbit, generated 
as it is by numerically iterating the map $F$. 
Of  course, in the end, the parameterization $K$ is approximated using a finite number 
of Fourier modes.

\subsection{The Parameterization Method} \label{sec:parmMethod} 
The parameterization method is a general functional analytic framework for studying 
invariant objects in discrete and continuous time dynamical 
systems.  While the method has roots in the classical works 
of Poincare, Darboux, and Lyapunov,  a complete theory
for fixed points of infinite dimensional nonlinear maps on Banach spaces
emerged in the three papers of Cabr\'{e}, Fontich, and de la Llave 
 \cite{MR1976079,MR1976080,MR2177465}.    
The corresponding theory for invariant tori (quasi-periodic motions)
and their whiskers (stable/unstable fibers) for skew product dynamical systems
is developed in the three papers
by Haro and de la Llave  \cite{MR2289544,MR2240743,MR2299977}.
Since its introduction in the papers just cited, the method has been expanded and 
applied by a number of authors, so that a complete overview of the literature 
is a task beyond the scope of the present work.  
The interested reader will find an informative and lively discussion of the 
history of the method in Appendix B of \cite{MR2177465}.  
Moreover, the recent book on the topic 
by Haro, Canadell, Figueras, Luque, and Mondelo \cite{mamotreto}
contains detailed discussion of the method, a thorough review of the literature, 
and many detailed example applications.

\subsubsection{Parameterization method for an invariant circle in the plane} 
\label{sec:parmMethodInvCirc}

In the case of invariant circles, the 
main idea behind the parameterization method is to 
treat Equation \eqref{eq:conjEq} as an equation for an 
unknown smooth $1$-periodic function $K \colon \mathbb{R} \to \mathbb{R}^2$, 
and to attempt to solve in an appropriate function space via a Newton iteration 
scheme.  Since the Newton method is based in the implicit function theorem, 
it is essential that we look for an isolated solution of Equation \eqref{eq:conjEq}.
Note however that any rotation of a solution is again a solution, and it is 
necessary to fix a phase condition to isolate.  
In the present work we fix the phase by requiring that $K(0)$ lies in 
a fixed (by us at the outset of the discussion) line in the plane.  
That is, we choose vectors $\bar p, \eta \in \mathbb{R}^2$ and 
add the constraint equation
\[
\left< \bar p - K(0), \eta \right> = 0,
\]
where $< \cdot, \cdot >$ is the usual inner product in $\mathbb{R}^2$.
The idea here is that $\bar p$ and $\eta$ determine a line $\ell$ 
transverse to $\Gamma$ and we require $K$ to map $\theta = 0$
into the line $\ell$, thus locking down the phase of the parameterization.

The issue now comes when we consider the resulting system of 
equations 
\begin{align*}
F(K(\theta)) =  K(\theta + \rho). \\
\left< \bar p - K(0), \eta \right> = 0 
\end{align*}
which is clearly two equations in one unknown $K$.  
To balance the system we introduce a saclar unfolding parameter
$\beta$.  That is, we consider the system of equations 
\begin{align*}
F(K(\theta)) = (1 + \beta) K(\theta + \rho) \\
\left< \bar p - K(0), \eta \right> = 0,
\end{align*}
as two equations in two unknowns $K$ and $\beta$.
This idea is inspired by similar techniques for balancing
the systems of equations describing periodic orbits 
in Hamiltonian systems.  See for example
\cite{MR2003792,MR1870260}.
As with any work involving unfolding parameters, we have to address
the relationship between the original unbalanced system of equations 
and the unfolded system. This is the content of Lemma \ref{lem:unfolding},
which shows that solutions of the unfolded system satisfy
the original equations.

Let $C^k_p(\mathbb{R})$ denote the space of smooth, period-$1$ 
functions, with $k > 1$. (In our applications $k = \omega$)
 and define the nonlinear mapping 
$\Psi \colon \mathbb{R} \times C^k_p(\mathbb{R}) \to \mathbb{R} \times C^k_p(\mathbb{R}) $
by 
\begin{equation}\label{eq:defPsi}
\Psi(\beta, K) = \left(
\begin{array}{c}
\left< \bar p - K(0), \eta \right>  \\
F(K(\theta)) - (1+\beta) K(\theta + \rho)
\end{array}
\right).
\end{equation}
Let $\mathbf{0}$ denote the zero function.  
We have the following.

\begin{figure}[ht]
\centering
\includegraphics[scale=0.15]{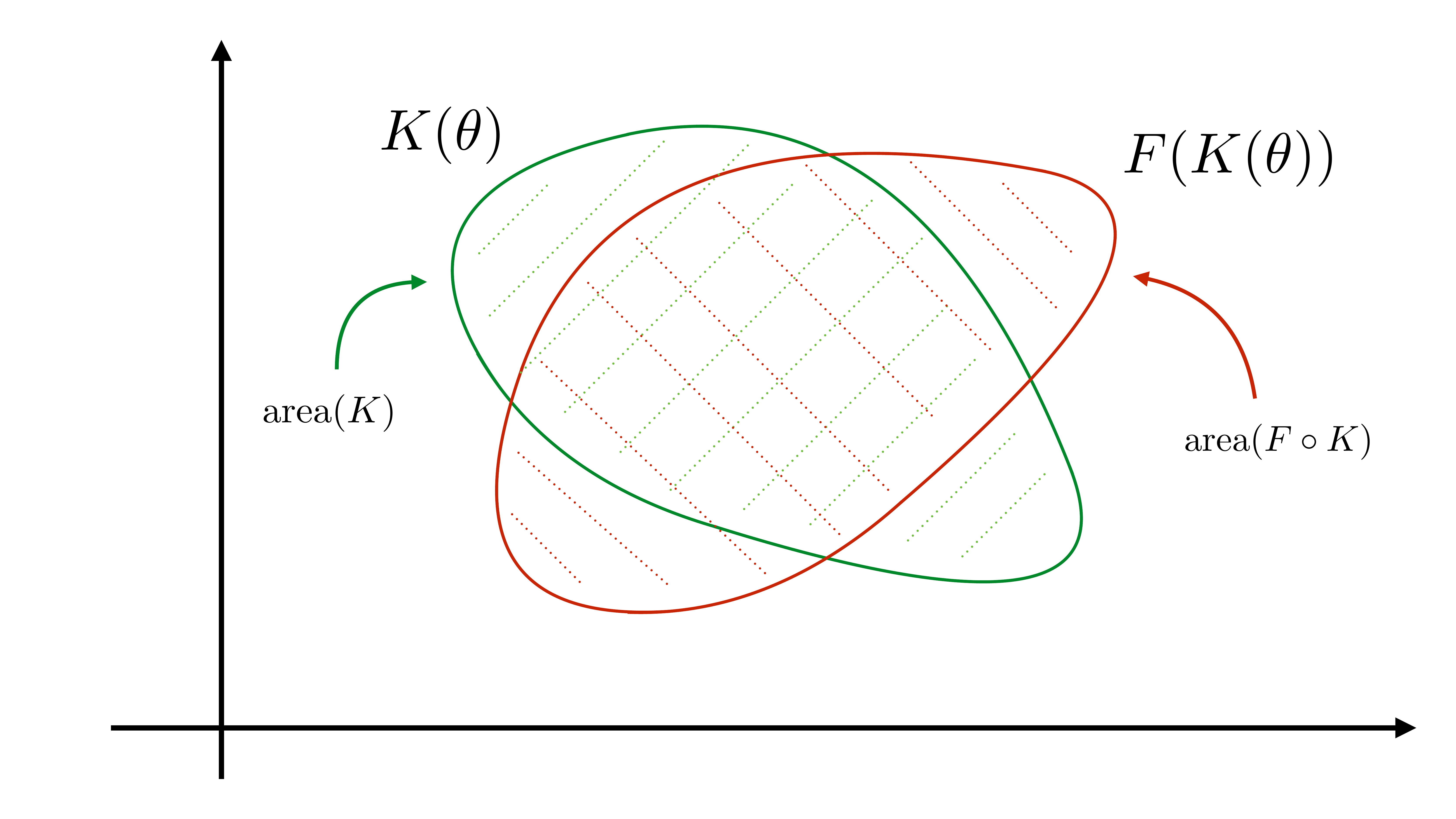} 
\caption{asdaf} \label{fig:areaConserved}
\end{figure}

\begin{lemma}[$\beta$ unfolds Equation \eqref{eq:defPsi}] \label{lem:unfolding}
Suppose that $K_* \in C_p^k(\mathbb{R})$ and $\beta_* \in \mathbb{R}$ have 
\[
\Psi(K_*, \beta_*) = 
\left(
\begin{array}{c}
\mathbf{0} \\
0
\end{array}
\right).
\]
Then $\beta_* = 0$, and
$K$ conjugates the dynamics on $\mbox{image}(K)$ generated by 
$F$ to the rotation map $R_{\rho}$.
\end{lemma}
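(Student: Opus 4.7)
The plan is to observe first that the second claim in the conclusion follows automatically from the first: once $\beta_* = 0$ is established, the second component of $\Psi$ reads $F(K_*(\theta)) = K_*(\theta + \rho)$, which is exactly the conjugacy equation \eqref{eq:conjEq}, and the phase constraint fixes the parameterization by pinning $K_*(0)$ on the chosen transverse line. The substance of the lemma therefore lies entirely in showing that $\beta_*$ must vanish, and I would attack this with the geometric idea promised in Remark \ref{rem:unfolding} and suggested by Figure \ref{fig:areaConserved}: area preservation.

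Let $\Gamma_* = K_*(\mathbb{R})$, which by periodicity and smoothness of $K_*$ is a closed curve in the plane, assumed to be a simple embedded curve as throughout Section \ref{sec:rotNumMaps}. Since $\theta \mapsto \theta + \rho$ is a bijection of $[0,1)$, passing from the functional equation to images yields the set identity
\[
F(\Gamma_*) = (1+\beta_*)\,\Gamma_*.
\]
Let $U$ be the Jordan region bounded by $\Gamma_*$, of positive area $A$. Area preservation of $F$ gives $\operatorname{area}(F(U)) = A$. On the other hand $F(U)$ is bounded by $F(\partial U) = (1+\beta_*)\Gamma_*$, and planar scaling by a factor $\lambda$ multiplies enclosed area by $\lambda^2$, so $\operatorname{area}(F(U)) = (1+\beta_*)^2 A$. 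Equating these two expressions gives $(1+\beta_*)^2 = 1$, so $\beta_* \in \{0,\,-2\}$.

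The main obstacle, as I see it, is ruling out the extraneous root $\beta_* = -2$. In that case the set identity forces $F(\Gamma_*) = -\Gamma_*$, the reflection of $\Gamma_*$ through the origin, and simultaneously $F$ must map $U$ homeomorphically onto $-U$, a region generically disjoint from $U$. I would exclude this possibility under the standing geometric assumption (implicit in the setup of Section \ref{sec:rotNumMaps} and automatic in any application-relevant neighborhood of a genuine quasiperiodic invariant circle) that $\Gamma_*$ is not invariant under the antipodal map and is not carried by $F$ onto its central reflection. This is the only place where more than pure area preservation is used, and it is the step to handle most carefully. With $\beta_* = -2$ excluded we obtain $\beta_* = 0$, and the conjugacy statement follows as in the opening paragraph.
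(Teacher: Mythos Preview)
Your approach is essentially the same as the paper's: both arguments compute the signed area enclosed by $\Gamma_* = \mathrm{image}(K_*)$ via the line integral $\tfrac{1}{2}\int_0^1 (K_1 K_2' - K_2 K_1')\,d\theta$, observe that this quantity is invariant under reparameterization (so the integral for $K_*$ equals that for $K_*(\cdot+\rho)$) and invariant under the area-preserving diffeomorphism $F$, and then compare with the integral computed from $(1+\beta_*)K_*(\cdot+\rho)$.

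You have in fact been more careful than the paper at the key step. The area functional is \emph{quadratic} in $K$, so substituting $F\circ K_* = (1+\beta_*)\,K_*\circ R_\rho$ into the line integral yields $A_3 = (1+\beta_*)^2 A_2$, exactly as you write. The paper records this as $A_3 = (1+\beta_*)\,A_2$ and concludes $\beta_*=0$ directly, which glosses over the scaling; your equation $(1+\beta_*)^2=1$ is the correct outcome of the area comparison, and the extraneous root $\beta_*=-2$ is a genuine loose end that the paper's write-up does not address.

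Your proposed exclusion of $\beta_*=-2$ is reasonable but, as you say, somewhat ad hoc. A cleaner way to close this in the intended setting is to recall from Section~\ref{sec:Newton} that the lemma is applied to the limit of a Newton iteration initialized at $\beta_0=0$; local convergence of Newton then forces $\beta_*$ to lie in a small neighborhood of $0$, ruling out $-2$ without any symmetry hypothesis on $\Gamma_*$. Alternatively, one can note that $F$ is an orientation-preserving diffeomorphism of the plane and hence cannot send the Jordan region $U$ onto $-U$ unless these coincide, which again is excluded near any genuine KAM circle about an elliptic fixed point.
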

\begin{proof}
Suppose that $K_*$ and $\beta_*$ provide a zero of $\Psi$.  Then 
\begin{equation} \label{eq:lemEq1}
F(K_*(\theta))  = (1+\beta_*) K_*(\theta + \rho).
\end{equation}
Let $\Gamma$ denote the curve parameterized by $K_*$,
and $\tilde \Gamma = F \circ \Gamma$ be the curve parameterized 
by $F \circ K_*$.  
Note that $\tilde \Gamma$ is diffeomorphic to $\Gamma$, due to the 
assumption that $F$ is a diffeomorphism, and that 
$K_*(\theta + \rho)$ is just a reparameterization of the curve $\Gamma$, 
with different phase.  

For $K \in C_p^k(\mathbb{R})$, consider the integrals 
\begin{align*}
A_1(K) & =  \frac{1}{2} \int_\Gamma
K_1 \, dy - K_2 \, dx \\
 & =  \frac{1}{2} \int_0^1 \left(
K_1(\theta) \frac{d}{d \theta} K_2(\theta) - K_2(\theta) \frac{d}{d\theta} K_1(\theta)
\right) \, d \theta
\end{align*}
\begin{align*}
A_2(K) & =  \frac{1}{2} \int_\Gamma
K_1 \circ R_\rho  \, dy - K_2 \circ R_\rho \, dx \\
 & =  \frac{1}{2} \int_0^1 \left(
K_1(\theta+\rho) \frac{d}{d \theta} K_2(\theta+\rho) - K_2(\theta+\rho) \frac{d}{d\theta} K_1(\theta+\rho)
\right) \, d \theta
\end{align*}
and 
\begin{align*}
A_3(K) & =  \frac{1}{2} \int_{\tilde \Gamma}
(F \circ K)_1   \, dy - (F \circ K)_2 \circ R_\rho \, dx \\
 & =  \frac{1}{2} \int_0^1 \left(
F(K(\theta)_1 \frac{d}{d \theta} F(K(\theta)_2 - F(K(\theta)_2 \frac{d}{d\theta} F(K(\theta)_1
\right) \, d \theta.
\end{align*}
To motivate the consideration of these integrals, we note
that if $\Gamma$ is a simple closed curve, then $\tilde \Gamma$
is a simple closed curve as well (as $F$ is a diffeomorphism) and
$A_1, A_2$ would correspond (by Green's theorem) to the area
 enclosed by $\Gamma$.  Similarly,  $A_3$ would be 
 the area enclosed by $\tilde \Gamma$.  See Figure \ref{fig:areaConserved}.
We remark that $A_1, A_2, A_3$ are well defined in general 
as long as $\Gamma$ is closed and , i.e. for all $K$ in $C_p^k(\mathbb{R})$ 
with $k > 1$, by Greens theorem, and that if the curves have self intersections then the 
integrals compute enclosed area with overlap.

Moreover, since $A_1$ and $A_2$ are computed over the same curve
(with different parameterizations) we have that 
\[
A_1 = A_2.
\]
Since $\hat \Gamma$ is diffeomorphic to $\Gamma$, and $F$ is 
an area preserving map in the plane (and hence a symplectomorphism)
we also have that $A_1= A_3$.  

However, integrating both sides of Equation \eqref{eq:lemEq1}, 
gives
\[
A_3(K_*) = (1+\beta_*) A_2(K_*).
\]
Combining this with the fact that
 $A_2(K_*) = A_1(K_*) = A_3(K_*)$, it follows that $\beta_* = 0$.
Heuristically speaking, the area enclosed by $\hat \Gamma$ cannot
be either more or less then the area enclosed by $\Gamma$
(where overlaps are counted correctly in both cases).
From this we obtain that 
\[
F(K_*(\theta)) = K_*(\theta + \rho),
\]
and hence $K_*$ conjugates the dynamics on $\Gamma$ to the 
irrational rotation $R_\rho$.  

%
%
%
%
%

\end{proof}

\subsection{Newton scheme in Fourier coefficient space} \label{sec:Newton}

Fortified by Lemma \ref{lem:unfolding}, we now seek to solve the Equation 
$\Psi(K, \beta) = 0$, as defined in Equation \eqref{eq:defPsi} for the 
unknown parameterization $K$.
Indeed suppose that $K_0$ is an approximate zero of the 
equation and and choose $\beta_0 = 0$.  
The Newton sequence is given by 
\[
\left(
\begin{array}{c}
\beta_{n+1} \\
K_{n+1}
\end{array}
\right) = 
\left(
\begin{array}{c}
\beta_{n} \\
K_{n}
\end{array}
\right) + 
\left(
\begin{array}{c}
\delta_{n} \\
\Delta_{n}
\end{array}
\right), \quad \quad n \geq 0,
\]
where $(\delta_n, \Delta_n)^T$ is a solution of the linear 
equation 
\begin{equation} \label{eq:homEq}
D \Psi(\beta_n, K_n) 
\left(
\begin{array}{c}
\delta_{n} \\
\Delta_{n}
\end{array}
\right) 
= - \Psi(\beta_n, K_n).
\end{equation}
Here, for $\beta, \delta \in \mathbb{R}$ and $K, \Delta \in C_p^k(\mathbb{R})$ the
Frechet derivative of $\Psi$ has action 
\[
D \Psi(\beta, K) 
\left(
\begin{array}{c}
\delta \\
\Delta
\end{array}
\right) 
= 
\left(
\begin{array}{c}
-\left<\Delta(0), \eta\right> \\
-\delta K(\theta+\rho) + DF(K(\theta)) \Delta(\theta) - (1+\beta) \Delta(\theta + \rho) 
\end{array}
\right).
\]

\begin{remark}[Fast algorithms exploiting the symplectic structure] \label{rem:fastAlg}
{\em
The efficiency of the Newton scheme is improved dramatically via  
the area preserving/symplectic structure of the problem, which 
facilitates reduction of the linear equation \eqref{eq:homEq} to 
constant coefficient, plus a quadratically small error.
This idea is known in the literature as \textit{approximate reducibility}.     
Neglecting the quadratic error, the resulting constant coefficient linear equations 
are easily diagonalized (in Fourier coefficient space).
The reader interested in state of the art algorithms is referred 
to\cite{MR4203515,MR3388608,MR4263036,MR4344967,MR4433114}
We again refer to \cite{mamotreto} for comprehensive discussion.
}
\end{remark}

Since we seek periodic $K$ it is natural to write
make the Fourier \textit{ansatz} 
\[
K(\theta) = \sum_{n \in \mathbb{Z}} \left(
\begin{array}{c}
a_n \\
b_n
\end{array}
\right)
e^{2 \pi i n \theta},
\]
as considered already in Section \ref{sec:weightsFourier}.
Note that translation by $\rho$ is a diagonal operation
in Fourier space, as
\begin{align*}
K(\theta + \rho)& = \sum_{n \in \mathbb{Z}} \left(
\begin{array}{c}
a_n \\
b_n
\end{array}
\right)
e^{2 \pi i n (\theta+\rho)} \\
& = \sum_{n \in \mathbb{Z}}
e^{2 \pi i n \rho}
 \left(
\begin{array}{c}
a_n \\
b_n
\end{array}
\right)
e^{2 \pi i n \theta},
\end{align*}
and that the phase condition can be written as 
\[
\left< \bar p - K(0), \eta \right> = 
\bar p_1 \eta_1 + \bar p_2 \eta _2 -  \sum_{n \in \mathbb{Z}} \left(\eta_1 a_n + 
\eta_2 b_n \right).
\]

The nonlinearity is more complicated, but note that if $K \in C_p^k$ then 
$F \circ K \in C_p^k$ as well, assuming that $F$ is as smooth as $K$.
(For the examples in this paper $F$ is real analytic).  Then $F \circ K$ has 
Fourier expansion 
\[
F(K(\theta)) = \sum_{n \in \mathbb{Z}} (F \circ K)_n e^{2 \pi i n \theta}, 
\]
where the Fourier coefficients $(F \circ K)_n$ depend in a nonlinear way
way on the Fourier coefficients $a_n, b_n$.  
In practice if $F$ is a polynomial map then this dependence is worked out 
by discrete convolutions, as seen in the examples.
Otherwise, the map is computed numerically 
using the FFT.  Indeed, using the FFT, evaluation of the nonlinearity 
is a diagonal operation in grid space.

\subsection{Multiple shooting for period-$d$ systems of invariant circles} \label{sec:multShoot}

We now consider a ``multiple-shooting'' parameterization 
method for studying $d$-periodic systems of quasi-periodic 
invariant sets.  Such a set is the union of $d$ disjoint simple 
closed curves, with the property each point on one curve 
maps to another curve in the system.  The dynamics are 
required to be quasi-periodic.
More precisely, suppose that 
$\Gamma_1, \ldots, \Gamma_d \subset \mathbb{R}^2$
are smooth simple closed curves with 
\begin{eqnarray}
F(\Gamma_1) & = \Gamma_2 \\
F(\Gamma_2) & = \Gamma_3 \\
& \vdots \\
F(\Gamma_{d-1}) & = \Gamma_d \\
F(\Gamma_d) & = \Gamma_1.
\end{eqnarray}
Suppose moreover that, for each $1 \leq j \leq d$, the 
curve $\Gamma_j$ is quasi-periodic for the composition 
map $F^d$.  That is, suppose that for each $1 \leq j \leq d$
 the mapping $F^d$ 
restricted to $\Gamma_j$ is an orientation preserving 
circle homeomorphism with irrational rotation number $\rho_j$.
The situation is illustrated in Figure \ref{fig:perK_invEq}.

Note that compositions of $F$
provide conjugacies between each of the $F^d$ invariant circles 
$\Gamma_j$. 
For example, the map $F$ provides a conjugacy between 
the dynamics on $\Gamma_j$ and $\Gamma_{j+1}$ while 
$F^2$ conjugates $\Gamma_{j}$ to $\Gamma_{j+2}$ and so on.  
Then, since $F$ is a diffeomorphism (and hence a homeomorphism),
 the topological invariance of the rotation number gives that
 $\rho_1 = \ldots = \rho_j$, 
and it is permissible to simply write $\rho$ for the common rotation 
number.

One computational approach for studying this invariant set
would be to apply the parameterization discussed in Section
\ref{sec:parmMethodInvCirc} to the map $F^N$, once for each of the curves 
$\Gamma_j$, $1 \leq j \leq d$.  
This approach however has two major disadvantages:  first,
the computational complexity 
of the composition map evaluation grows exponentially with the 
number of compositions.  
For example if $F$ is polynomial of degree $m$ then $F^2$ is polynomial 
of degree $m^2$ and $F^d$ is polynomial of degree $m^d$.  
The second disadvantage is that one has to compute 
parameterizations of $\Gamma_1, \ldots, \Gamma_d$ separately.  

Instead, we propose a ``multiple shooting'' parameterization 
method for computing the entire period period $d$ systems of 
quasiperiodic curves all at once.
Earlier successful multiple shooting approaches
are developed in the  
work of \cite{MR3706909} for parameterizing  
stable/unstable manifolds attached to period-$d$ orbits of maps,
and in the \cite{MR4446093} for studying invariant objects for
discrete dynamical systems defined by an implicit rule.  
In the current context we look for smooth parameterizations 
$K_1, \ldots, K_d \colon \mathbb{R} \to \mathbb{R}^2$ --
all of period one  -- so that for each
$\theta \in \mathbb{R}$  we have that
\begin{align*}
F(K_1(\theta)) & = K_2(\theta) \\
F(K_2(\theta)) & = K_3(\theta) \\
& \vdots \\
F(K_{d-1}(\theta)) & = K_d(\theta) \\
F(K_d(\theta)) & = K_1(\theta + \rho),
\end{align*}
where $\rho$ is the rotation number associated with any of 
the invariant circles of the composition map 
$F^d$.  

Once again, it is necessary to append a scalar 
constraint to fix the phase of one of the circles -- this 
in turn fixes the phase of each parameterization.  
Appending the phase constraint unbalances the 
system so that it is necessary to introduce an unfolding 
parameter.  Taking these considerations into account, 
we define the operator 
$\Psi_d \colon \mathbb{R} \times C_d^k(\mathbb{R}, \mathbb{R^2})^d \to 
 \mathbb{R} \times C_d^k(\mathbb{R}, \mathbb{R^2})^d$, defined by 
 \begin{equation} \label{eq:sysInvEq}
\Psi_d(\beta, K_1, K_2, K_3, \ldots, K_{d-1}, K_d) = 
 \left(
 \begin{array}{c}
 \left<K_1(0) - \bar p, \eta \right> \\
 F(K_1(\theta)) - K_2(\theta) \\
F(K_2(\theta)) - K_3(\theta) \\
 \vdots \\
F(K_{d-1}(\theta)) - K_d(\theta) \\
F(K_d(\theta)) - (1+\beta) K_1(\theta + \rho),
 \end{array}
 \right)
 \end{equation}
Again, the important thing to stress if that the 
definition of $\Psi_d$ does in to involve any compositions 
of the map $F$.  A Newton method is defined as in 
Section \ref{sec:Newton}.

%

\begin{figure}[t!]
\centering
\includegraphics[scale=0.18]{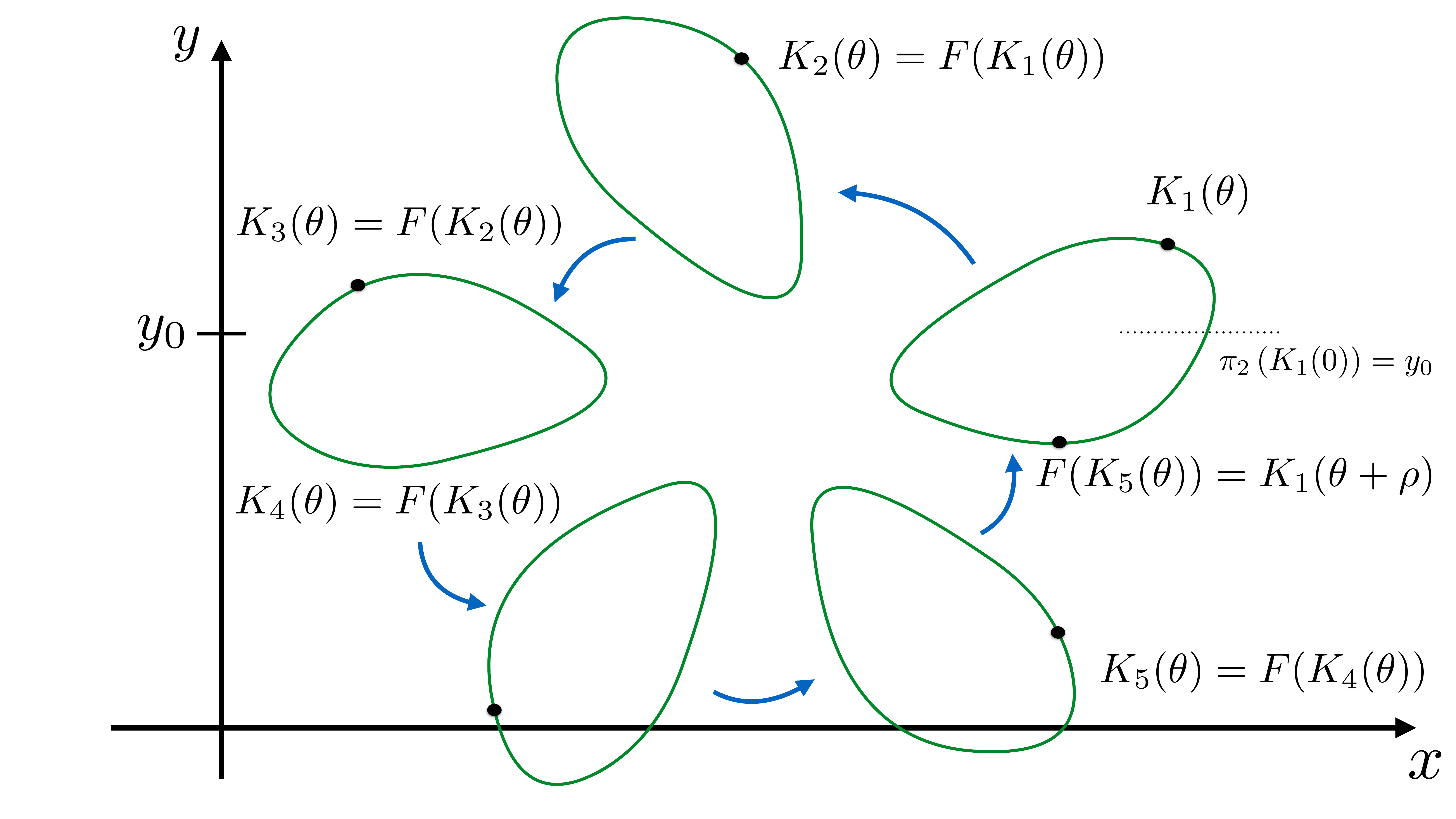} 
\caption{Schematic illustration of a period $5$ invariant circle and the 
resulting parameterization method.} \label{fig:perK_invEq}
\end{figure}

\section{Numerical recipe: initializing the parameterization method via weighted averaging}
\label{sec:numericalRecipe}
Suppose that $\Omega \subset \mathbb{R}^2$ is an open set and let 
$F \colon \Omega \to \Omega$ be a smooth, area preserving map.  
The following algorithm (i) allows us to determine that we have an 
initial condition whose orbit is very likely one or near a quasiperiodic invariant 
circle, (ii) allows us to compute the rotation number efficiently and 
accurately from just the orbit segment data, 
(iii) allows us to easily determine the truncation dimension for the 
finite dimensional Fourier projection of the parameterization,
(iv) leads in a completely 
natural way to an initial guess for the parameterization method which 
can be made as accurate as we like -- hence will definitely converge.
We also (v) have an a-posteriori indicator which allows us to decide 
when Newton has converged.  The following steps constitute the 
main steps of our algorithm.  
\begin{itemize}
\item \textbf{Step 0:} choose $p_0 =(x_0,y_0) \in \Omega$ and $M \in \mathbb{N}$.  
Compute the orbit segment $\mathcal{O}_M = \{p_j\}_{j=0}^M$ defined by 
\[
p_{j+1} = F(p_j), \quad \quad \quad j \in 1, \ldots, M-1.
\]
Now, decide if $\mathcal{O}_M$ is sampled from an invariant circle, or from a stochastic
zone.  This can be done either by graphical inspection, or using the techniques of 
\cite{MR4104977,MR4322369} already mentioned in Remark \ref{rem:test}.
If $\mathcal{O}_M$ appears to be sampled from a quasiperiodic invariant circle, 
then we continue to the next step.  Otherwise, choose a different $p_0$.  
\item \textbf{Step 1: } Compute the rotation number $\rho$ using the 
weighted averaging technique discussed in Section \ref{sec:rotNumMaps}.
Here it is important to obtain as many correct digits as possible.  This can 
be done by increasing $M$ by ten or twenty percent and repeating the 
calculation until numerical convergence in the last digit is observed.  
\item \textbf{Step 2: } Decide how many modes are needed to accurately 
represent $K$.  To do this, we compute the Fourier coefficients 
$(a_n, b_n)$ using the averaging scheme described in Section \ref{sec:weightsFourier}.
However we compute using a much shorter sample (that is we use $M$ much 
smaller than in the rotation number calculation) and sample the modes 
by computing them only for $n = 10 k$, and $k = 1,2, 3, \ldots$.  Using 
this scheme we can rapidly find an $N \in \mathbb{N}$ so that 
$\|(a_n, b_n)\| < \epsilon_{{\tiny \mbox{machine}}}$ for $|n| > N$.
\item \textbf{Step 3: } We now calculate a good initial condition for the 
Newton scheme.  For this we take $N_0$ roughly ten or twenty percent of 
$N$ and compute $(a_n, b_n)$, using moderate accuracy (i.e. $M$ larger than in 
step 2 but smaller than in step 1), for $|n| \leq N_0$  
using the weighted averages of Section \ref{sec:weightsFourier}.
Let's call the resulting degree $N_0$ Fourier polynomial $\tilde K$.
Compute the numerical defect 
\[
\tilde \epsilon = \sup_{\theta \in [0,1]} \left| F(\tilde K(\theta)) - \tilde K(\theta + \rho) \right|.
\]
If $\tilde \epsilon$ is smaller than some tolerance -- which should be less than one but
is usually taken to be between $0.001$ and $0.1$, depending on 
the judgment of the user --
then the initial guess is ``good'' and we set $K_0 = \tilde K$.
If the initial defect is not good enough, then we can increase $N_0$ and try again.  
\item \textbf{Step 4: } Perform the Newton iteration (in the space of $N$-Fourier 
coefficients) as described in Section \ref{sec:Newton}. Iterate the Newton 
scheme until the 
defect
\[
\epsilon_m =  \sup_{\theta \in [0,1]} \left| F(K_m(\theta)) - K_m(\theta + \rho) \right|,
\]
either saturates or decreases below some prescribed tolerance
(usually taken to be some small multiple of machine epsilon).
\end{itemize}

Several remarks are in order.  First, we note that the $C^0$ norm proposed 
for measuring the defect in Steps $3$ and $4$ can be replaced with 
more efficient weighted $\ell_1$ norms, and this involves computations only 
in coefficient space rather than function evaluations.  
We also remark that if the Newton scheme does not converge in 
Step $4$, then we conclude that the initial defect was 
not good enough and go back to step $3$ to refine $N_0$.  

It should also be noted that the defect calculations proposed above 
provide only a heuristic indication of convergence.  
More reliable error bounds for the parameterization method, 
based on a-posteriori Kantorovich-type results, are obtained in 
\cite{MR2289544}.  See also 
\cite{mamotreto}.  Indeed, this kind of a-posteriori analysis can be combined
with deliberate control of round of errors to obtain mathematically 
rigorous computer assisted existence proofs.
Early examples of this kind of argument are found in the work of 
\cite{MR1101369,MR1008096}.
For a more modern treatment, including a thorough discussion of the 
current state of the literature,
we refer the interested reader to the work of \cite{MR3709329}.

\section{Examples}
\label{sec:examples}

\subsection{A quadratic family of maps: area preserving Henon}

As a first example, consider 
the area-preserving H\'enon map, $F: \R^2 \to \R^2$
as described in \cite{Hen69},
and given by the formula
\[
F(x,y) = \left(
\begin{array}{c}
 x \cos(\alpha) - (y - x^2) \sin(\alpha) \\
x \sin(\alpha) + (y - x^2) \cos(\alpha)
\end{array}
\right).
\]
One checks that the determinant of the Jacobian matrix 
$DF(x,y)$ is one for all $(x,y) \in \mathbb{R}^2$, so that the 
system is area preserving as advertised. 
The dynamics of the system are studied for a number of parameter 
values $\alpha$ in the book of \cite{Arr90}. In particular, 
numerical simulations suggest that the system appears to admit 
quasiperiodic 
invariant circles and $K$-periodic systems of such.  
The map can be seen as a linear rotation matrix 
at the origin, plus a quadratic nonlinearity.
There is one (and only one) fixed point --at the origin and of elliptic stability type --
so that in a small enough neighborhood of the origin we expect the existence of 
large measure sets of KAM tori.  
This expectation is supported by
numerical simulations, as seen for example in Figure \ref{fig:henonPhase}.
for $\alpha = \cos^{-1}(0.24)$.

\begin{figure}[t!]
\centering
\includegraphics[scale=0.2]{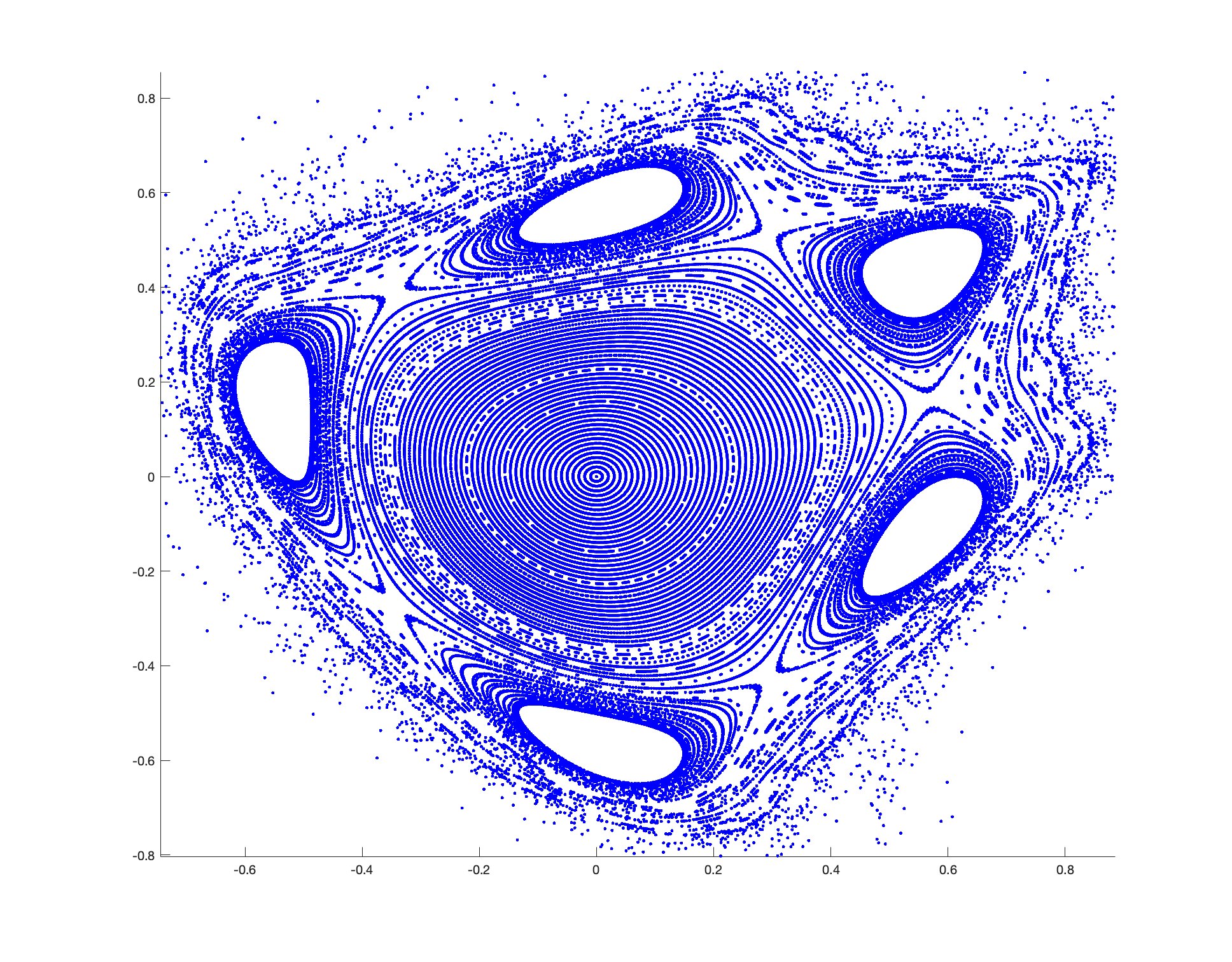} 
\caption{Phase space structure for the area 
preserving Henon map: near the origin the 
dynamics are close to pure rotation and we 
see a large set of invariant circles.  These
get more and more distorted further from the 
origin, and eventually there appears to be 
a $1:5$ resonance which gives rise to a 
family of systems of invariant circles with 
$5$ topological components.  Further from the 
origin the dynamics appears to be chaotic.  
} \label{fig:henonPhase}
\end{figure}

\begin{figure}[t!]
\centering
\includegraphics[scale=0.2]{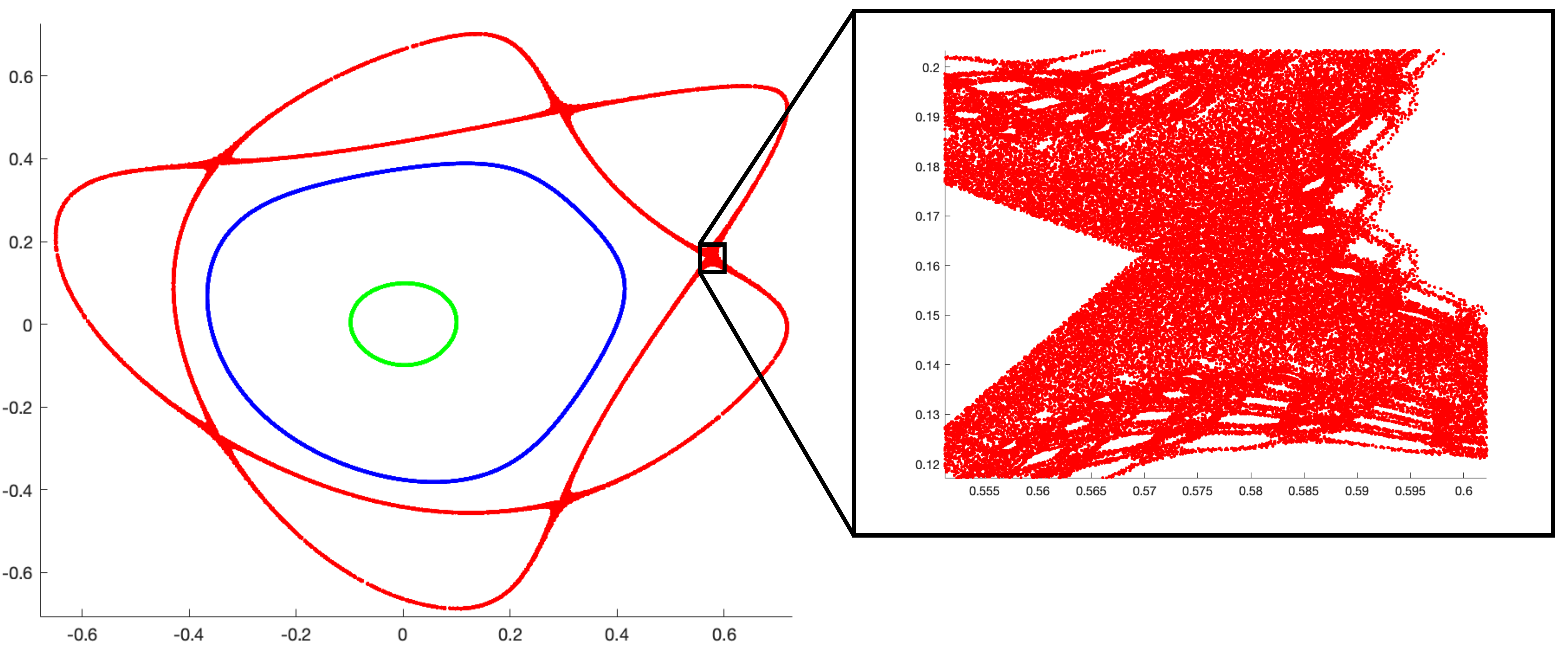} 
\caption{Three orbits: 
one million iterates of the area preserving Henon map
for the three initial conditions $\mathbf{p_0} = (0.1, 0.0)$, 
 $\mathbf{p_0} = (0.4, 0.0)$ and $\mathbf{r_0} = (0.3, 0.44)$.
 (green, blue and red respectively).  Visual inspection suggests 
 that the green and blue curves are diffeomorphic to circles, 
 while zooming in on the point cloud generated by the 
 orbit of $\mathbf{r}_0$ reveals fractal structure and suggests
 chaotic dynamics.  This suggestion is enforced by the quantitative
 data in Table \ref{tab:table1}.
} \label{fig:stepZeroData}
\end{figure}

\begin{table}[t!]
  \begin{center}
       \begin{tabular}{c|c|c|c} 
      $M$ & $\rho_M(\mathbf{p_0})$ & $ \rho_M(\mathbf{q}_0)$ & $\rho_M(\mathbf{r}_0)$ \\
      \hline
      100 &  0.211095710088270 & 0.206164038365342 & 0.196863099485937 \\
      500 &0.211095709965501  & 0.206174513248940 &  0.197503558666674 \\
      1000 & 0.211095709965479 & 0.206174514865070 & 0.197628415757003 \\
      5000 &   0.211095709965481   &   0.206174514865715    &   0.199431995293399    \\
      10,000 &  0.211095709965478  & 0.206174514865712  &    0.199737097322017  \\
      50,000 & 0.211095709965486  & 0.206174514865718 &  0.199823145343572 \\
      100,000 & 0.211095709965480 & 0.206174514865710&  0.199984739391916 \\
      110,000 &  0.211095709965478 & 0.206174514865708 & 0.199990822989916 \\
      120,000 &  0.211095709965479 & 0.206174514865704&   0.199994461862213 \\
      150,000 &  0.211095709965479 &  0.206174514865705&   0.199998753698169 \\
      200,000 &  0.211095709965478 & 0.206174514865702 &  0.199999888701773
    \end{tabular}
     \caption{Numerically computed values of the rotation number 
     as a function of $M$ for the 
     three initial conditions $p_0$, $q_0$, and $r_0$.  This is denoted $\rho_M(\cdot)$, 
     where $\cdot$ is one of the initial conditions and $M$ between one hundred and 
     and two hundred thousand.  The computations suggest that the orbits of $p_0$
     and $q_0$ are quasiperiodic, while the rotation number of the orbit of $r_0$
     varies stochastically after the 4th digit.} \label{tab:table1}
  \end{center}
\end{table}

\subsubsection{A worked example: period 1 invariant circle} \label{sec:per1_henon}
We now describe in some detail the computation of a
period one invariant circle for the area preserving Henon map.

\begin{itemize}
\item \textbf{Step 0:} consider the three initial conditions 
$p_0, q_0, r_0 \in \mathbb{R}^2$ given by 
\[
p_0 = 
\left(
\begin{array}{c}
0.1 \\
0.0
\end{array}
\right), \quad \quad 
q_0 = 
\left(
\begin{array}{c}
0.4 \\
0.0
\end{array}
\right), \quad \quad \mbox{and} \quad \quad  
r_0 = 
\left(
\begin{array}{c}
0.3 \\
-0.44
\end{array}
\right).
\]
One million iterates of each initial condition are illustrated in 
Figure \ref{fig:stepZeroData}, with a zoom in on the orbit of 
$r_0$, illustrating that the orbit appears to be chaotic rather than 
quasiperiodic.   This appearance is confirmed by the rotation 
number calculations given in Table \ref{tab:table1}.
Based on these results, and for the rest of the Section, 
we focus on the orbit of $q_0$.   
\item \textbf{Step 1: } Based on the results of step 0, since we
can say with  confidence that the rotation number 
associated with the orbit of $q_0$ has 
\[
\rho \approx \rho_{120,000} = 0.206174514865704,
\]
which is likely correct except possibly in the last decimal place.  
\item \textbf{Step 2: } Using the rotation number computed in the 
last  step, we sample the Fourier coefficients in the higher modes.
We note that with $M = 1,000$ we already appeared to have 
seven correct figures in the rotation number calculation.  
So we will compute Fourier coefficients with an orbit of only this 
length.  Let $p_n = (a_n, b_n)$ denote the $n$-th Fourier vector
and 
\[
\|p_n\| = \max(|a_n|, |b_n|),
\]
with $|\cdot|$ the complex absolute value.  
Sampling the coefficients
for $n = 2, 4, 6, 8, 10, $ we have 
\begin{align*}
\| p_2 \| & = 2.0 \times 10^{-2} \\
\| p_4 \| & =  4.3 \times 10^{-3} \\
\|p_6\| & = 6.5 \times 10^{-4} \\
\|p_8\| & = 4.8 \times 10^{-5} \\
\|p_{10}\| & = 8.4 \times 10^{-6} \\
\end{align*}
Based on the observed decay rate, we guess that 
we should reach machine precision 
at roughly $n = 30$.
Being a little conservative, we take $K = 5$
and truncate to $N = 2^K = 32$ Fourier modes.
(Powers of 2 are desirable if the implementation 
emploies the FFT).
\item \textbf{Step 3: }. We now compute the 
Fourier series for $N_0 = 5$, from 10,000
data points.  This is about 10 percent of the 
modes to be Used in the Newton scheme.  
This leads to a trigonometric polynomial that we refer to 
as 
\[
\tilde K(\theta) = \sum_{n = -5}^5 \left(
\begin{array}{c}
a_n \\
b_n 
\end{array}
\right) e^{2 \pi i n \theta}.
\]
Then initial defect associated with this approximate solution is 
already $\epsilon \leq 9.2 \times 10^{-4}$.  We therefore 
consider this a good initial approximation and 
define $K_0 = \tilde K$.  
\item \textbf{Step 4: }
We run the newton iteration and obtain defects 
\begin{align*}
\epsilon_1 = 2.2 \times 10^{-6} \\
\epsilon_2 = 4.1 \times 10^{-12} \\
\epsilon_3 = 6.1 \times 10^{-13} \\
\epsilon_4 = 6.0 \times 10^{-13}
\end{align*}
and the conjugacy error stagnates.
The Newton scheme executes in $0.062$ seconds.  
Running again from the same initial condition $K_0$
with $N = 64$, the next power of two Fourier modes, 
results in a final conjugacy error of $\epsilon = 1.6 \times 10^{-14}$ and 
takes $0.18$ seconds.  The next power $N = 128$ takes $0.38$ seconds
and results in a conjugacy error of $\epsilon = 6.1 \times 10^{-16}$, 
which is finally on the order of double precision Machine epsilon.  
Truncating at  $N = 256$ Fourier coefficients results in a $0.9$ second 
runtime, and does not improve the conjugacy error.  
Indeed, we see that the initial $N = 32$ calculation was already nearly optimal.  
\end{itemize}

We provide a few additional details regarding the 
numerical implementation in this example.  
Let $a = \{a_n \}_{n \in \mathbb{Z}}$ and 
$b = \{b_n \}_{n \in \mathbb{Z}}$ denote the unknown 
Fourier series coefficients for the parameterization $K$.  
Then 
\[
F(K(\theta)) = \sum_{n \in \mathbb{Z}} \left(
\begin{array}{c}
 \cos(\alpha) a_n - \sin(\alpha) b_n + \sin(\alpha)(a*a)_n \\
\sin(\alpha) a_n + \cos(\alpha) b_n - \cos(\alpha) (a*a)_n
\end{array}
\right) e^{2 \pi i n \theta},
\]
where 
\[
(a*a)_n = \sum_{k \in \mathbb{Z}} a_{n-k} a_k,
\]
denotes discrete convolution.  Recalling that 
\[
K(\theta + \rho) = \sum_{n \in \mathbb{Z}} e^{2 \pi i n \rho }
 \left(
\begin{array}{c}
a_n \\
b_n
\end{array}
\right) e^{2 \pi i n \theta},
\]
then the unfolded conjugacy equation $F(K(\theta)) = (1+\beta)K(\theta + \rho)$ 
is satisfied if and only of the Fourier coefficients on the left 
equal the Fourier coefficients on the right, and we require that 
\begin{equation} \label{eq:fourierProject}
 \left(
\begin{array}{c}
 \cos(\alpha) a_n - \sin(\alpha) b_n + \sin(\alpha)(a*a)_n \\
\sin(\alpha) a_n + \cos(\alpha) b_n - \cos(\alpha) (a*a)_n
\end{array}
\right)  = (1+\beta)e^{2 \pi i n \rho }
 \left(
\begin{array}{c}
a_n \\
b_n
\end{array}
\right) \quad \mbox{for } n \in \mathbb{Z}.
\end{equation}
Moreover, noting that the invariant circle given by the data
crosses the $x$-axis we choose the phase condition 
\[
K_2(0) = \sum_{n \in \mathbb{Z}} b_n  = 0.
\]
Truncating at $N$ Fourier modes leads to the system of  $2(2N+1) + 1$ equations
\begin{align*}
b_{-N} + \ldots + b_0 + \ldots b_{N} & = 0 \\ 
 \cos(\alpha) a_{-N} - \sin(\alpha) b_{-N} + \sin(\alpha)(a*a)^N_{-N}  -  (1+\beta)e^{-2 \pi i N \rho } a_{-N} & = 0\\
\sin(\alpha) a_{-N} + \cos(\alpha) b_{-N} - \cos(\alpha) (a*a)^N_{-N} -  (1+\beta)e^{-2 \pi i N \rho } b_{-N} & = 0\\
& \vdots \\
\cos(\alpha) a_{0} - \sin(\alpha) b_{0} + \sin(\alpha)(a*a)^N_{0}  -  (1+\beta) a_0 & = 0\\
\sin(\alpha) a_{0} + \cos(\alpha) b_{0} - \cos(\alpha) (a*a)^N_{0} -  (1+\beta) b_0 & = 0\\
& \vdots  \\
\cos(\alpha) a_{N} - \sin(\alpha) b_{N} + \sin(\alpha)(a*a)^N_{N}  -  (1+\beta)e^{2 \pi i N \rho } a_N& = 0\\
\sin(\alpha) a_{N} + \cos(\alpha) b_{N} - \cos(\alpha) (a*a)^N_{N} -  (1+\beta)e^{2 \pi i N \rho } b_N & = 0\\
\end{align*}
in the $2(2N+1) + 1$ unknowns 
$\beta, a_{-N}, b_{-N}, \ldots, a_0, b_0, \ldots, a_{N}, b_N$.
Here 
\[
(a * a)_n^N = \sum_{\stackrel{k_1 + k_2 = n}{-N \leq k_1, k_2 \leq N}} a_{k_1} a_{k_2},
\]
is the truncated discrete convolution.  
Newton's method is used to solve this system.  

A higher level representation is obtained as follows.
Let $a = \{a_n\}_{n \in \mathbb{Z}}$ and $b = \{b_n \}_{n \in \mathbb{Z}}$
denote the unknown sequences of Fourier coefficients and define the 
``diagonal'' linear operator $R_\rho$ on an infinite sequence by 
\begin{equation} \label{eq:def_Rrho_operator}
(R_\rho a )_n = e^{2 \pi i n \rho} a_n. 
\end{equation}
Inspired by the conditions given in Equation \eqref{eq:fourierProject}, 
we define the mapping 
\[
\mathcal{F}(\beta, a,b) = \left(
\begin{array}{c}
\sum_{n \in \mathbb{Z}} b_n  \\
\cos(\alpha) a - \sin(\alpha) b + \sin(\alpha) a*a - (1+\beta) R_\rho a \\
\sin(\alpha) a + \cos(\alpha) b - \cos(\alpha) a*a - (1+\beta) R_\rho b
\end{array}
\right), 
\]
and seek a zero of $F$.  Note that, for numbers $\delta$ and 
infinite sequences $u, v$, 
we see that the action of the (formal)
Frechet derivative on $(\delta, u, v)$ is given by 
\[
D\mathcal{F}(\beta, a, b)(\delta, u, v) = 
\left(
\begin{array}{c}
\sum_{n \in \mathbb{Z}} v_n \\
\cos(\alpha) u - \sin(\alpha)v + 2 \sin(\alpha) a * u - (1+\beta) R_\rho u - R_\rho a \\
\sin(\alpha) u + \cos(\alpha) v - 2\cos(\alpha) a * u - (1 + \beta )R_\rho v - R_\rho b 
\end{array}
\right).
\]
A more useful is the following expression for the 
derivative as a ``matrix of operators.''

Let $\mathbf{0}$ denote the zero Fourier sequence and $\mathbf{1}$ the 
sequence of ones.  Moreover, let $\mathbf{R}_\rho$ denote the bi-infinite 
diagonal matrix with $e^{2 \pi i n \rho}$ on the diagonal entries, and 
let $\mathbf{S}_\alpha, \mathbf{C}_\alpha$ denote the bi-infinite diagonal 
matrices with $\sin(\alpha)$ and $\cos(\alpha)$ on their diagonals respectively.
Finally, let $\mathbf{A}$ denote the (dense) bi-infinite matrix defined by the linear mapping 
\[
\mathbf{A} h = a * h.
\]
The matrix for $A$ is easily worked out by considering it's action on 
the basis for bi-infinite sequence space given by sequences with on one non-zero 
entry.  The classical result is that $A$ is a Topoletz matrix for the bi-infinite sequence $a$.  
Then the derivative can be represented as 
\[
D\mathcal{F}(\beta, a, b) = 
\left(
\begin{array}{ccc}
0 & \mathbf{0} & \mathbf{1} \\
-\mathbf{R}_\rho a 
& \mathbf{C}_\alpha+ 2 \mathbf{C}_\alpha \mathbf{A} - (1+\beta) \mathbf{R}_\rho 
& - \mathbf{S}_\alpha \\
- \mathbf{R}_\rho b & \mathbf{S}_\alpha- 
2 \mathbf{C}_\alpha \mathbf{A} & \mathbf{C}_\alpha - (1+\beta) \mathbf{R}_\rho
\end{array}
\right).
\]
Truncating the mapping $\mathcal{F}$ and its derivative given above leads
to a numerical implementation of the Newton scheme.

\subsection{Period $K$ circles in H\'enon} \label{sec:per5}

Another apparent feature of the phase space readily visible in 
Figure \ref{fig:henonPhase}, is what looks like a family of period 5
invariant circles.  After visual inspection of the figure, 
we plot a trajectory using
$(x_0, y_0) = (0.5, 0)$ as our seed point, and observe that 
after 1000 iterates, the orbit appear to fill out the five circles shown 
in the left frame of Figure \ref{fig:per5}. 
We remark that each iterate jumps from one circle to the next circle to 
its left (counter clockwise rotation).  

\begin{figure}[t!]
\begin{centering}
\includegraphics[width=.46\textwidth]{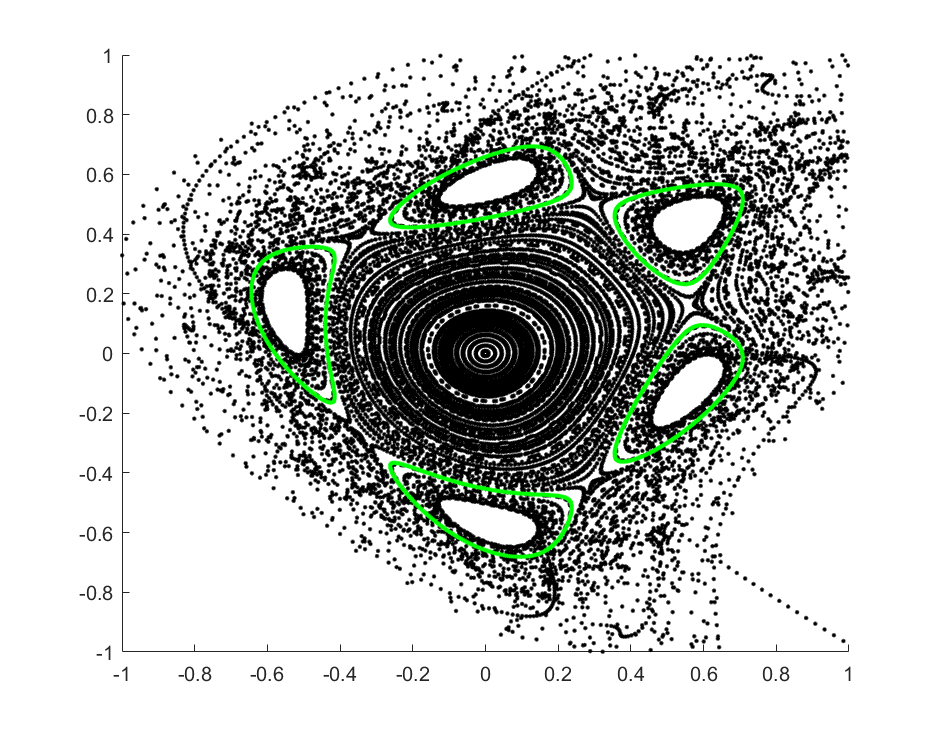} 
\includegraphics[width=.46\textwidth]{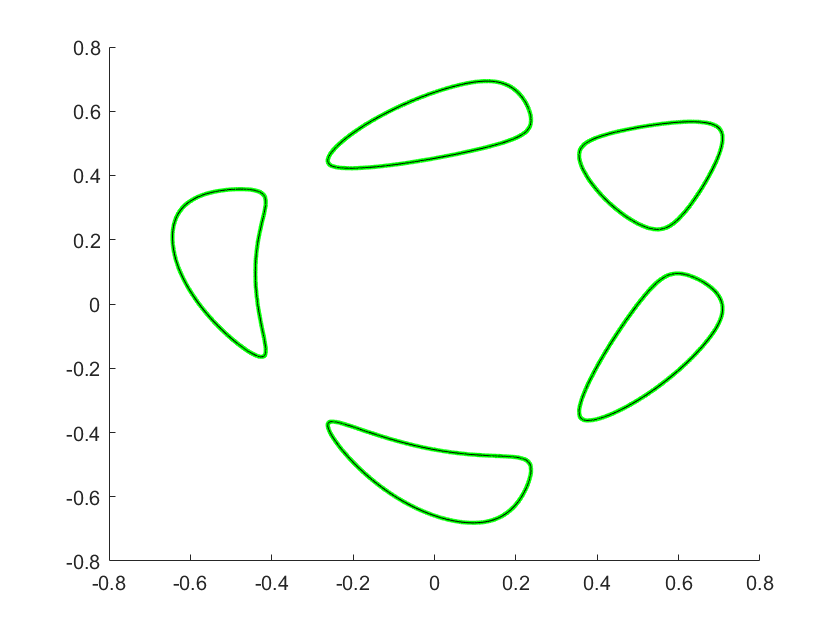} 
\end{centering}
\caption{\textbf{A period five system of quasiperiodic invariant circles:}
left frame illustrates a numerical simulation of the area preserving system, 
and an orbit which appears to lie on the period 5 system of invariant circles.
The right frame illustrates the image of the five Fourier series parameterizing 
the quasiperiodic system.  
} \label{fig:per5}
\end{figure}

\subsubsection{Multiple shooting invariance equations} \label{sec:multShoot_per5ex}
The idea is to follow the steps proposed in Section \ref{sec:numericalRecipe}, 
with a few small modifications. 
After guessing a point on the period $5$ system, we compute an 
orbit segment for the fifth iterate of $F$, denoted $F^5$, and compute the 
rotation number for the composition.  This means that if we 
desire an  orbit segment of length $M$, we have to iterate $F$
$5M$ times.  Since all $5$ circles have the same rotation number, 
this only has to be done once.  Using the Birkhoff averages 
with an orbit segment of length $5 \times 9,000$ leads to 
$\rho = 0.190669478955264$ which has stabilized numerically to the last 
digit.  

Now let $K_1, K_2, K_3, K_4, K_5 \colon \mathbb{R} \to \mathbb{R}^2$ denote the 
desired parameterizations for the five component circles of the system.
We use the weighted Birkhoff averages to compute (roughly) the decay 
rate of these Fourier series (to guess that the optimal truncation order is 
around $N = 200$) and to approximate
the first few Fourier coefficients in each case.  
Again, for this work 
we deal only with (shorter) orbit segments for the composition map $F^5$.
We stress that this just requires computing a long enough orbit for $F$
and then neglecting all but every fifth point on the orbit.

Now, when it comes to the Newton method we work with multiple shooting 
system of equations, so that the nonlinearity is still only quadratic
(note that $F^5$ is a polynomial map of degree $2^{5} = 32$).
Keeping in force the notation from Section \ref{sec:per1_henon}, we 
define the mapping 
\[
\mathcal{F}_5(\beta, a^1, b^1, a^2, b^2, a^3, b^3, a^4, b^4, a^5, b^5) =
\]
\[
\left(
\begin{array}{c}
\sum_{n \in \mathbb{Z}} b^1_n \\
\cos(\alpha) a^1 - \sin(\alpha) b^1 + \sin(\alpha) a^1*a^1 -  R_\rho a^2 \\
\sin(\alpha) a^1 + \cos(\alpha) b^1 - \cos(\alpha) a^1*a^1 -  R_\rho b^2 \\
\cos(\alpha) a^2 - \sin(\alpha) b^2 + \sin(\alpha) a^2*a^2 -  R_\rho a^3 \\
\sin(\alpha) a^2 + \cos(\alpha) b^2 - \cos(\alpha) a^2*a^2 - R_\rho b^3 \\
\cos(\alpha) a^3 - \sin(\alpha) b^3 + \sin(\alpha) a^3*a^3 -  R_\rho a^4 \\
\sin(\alpha) a^3+ \cos(\alpha) b^3 - \cos(\alpha) a^3*a^3 -  R_\rho b^4 \\
\cos(\alpha) a^4 - \sin(\alpha) b^4 + \sin(\alpha) a^4*a^4 -  R_\rho a^5 \\
\sin(\alpha) a^4 + \cos(\alpha) b^4 - \cos(\alpha) a^4*a^4 -  R_\rho b^5 \\
\cos(\alpha) a^5 - \sin(\alpha) b^5 + \sin(\alpha) a^5*a^5 - (1+\beta) R_\rho a^1 \\
\sin(\alpha) a^5 + \cos(\alpha) b^5 - \cos(\alpha) a^5*a^5 - (1+\beta) R_\rho b^1 \\
\end{array}
\right),
\]
and have that if $(\beta, a_1, b_1, a_2, b_2, a_3, b_3, a_4, b_4, a_5, b_5)$ is a 
zero of $\mathcal{F}_5$, then $\beta = 0$ the $a's$ and $b's$ are the Fourier 
coefficient sequences of the parameterizations $K_1, K_2, K_3, K_4$, and $K_5$
for the system of invariant circles.  
Note that while the map has more components, the nonlinearity is still only as 
complicated as that of $F$. In this case quadratic.  
The derivative of $\mathcal{F}$ is easily computed. Truncating 
the map and its derivative leads to the numerical implementation of the 
Newton method.  Note that all the operations and linear operators 
are as in the case of a period one circle.  Only the number of components
and the coupling is different.  
After implementing these adjustments we are able to compute 
the parameterizations to machine precision as in the earlier example. 
The resulting Fourier series are plotted in the right Frame of Figure \ref{fig:per5}.

\begin{figure}[t!]
\begin{centering}
\includegraphics[width=.3\textwidth]{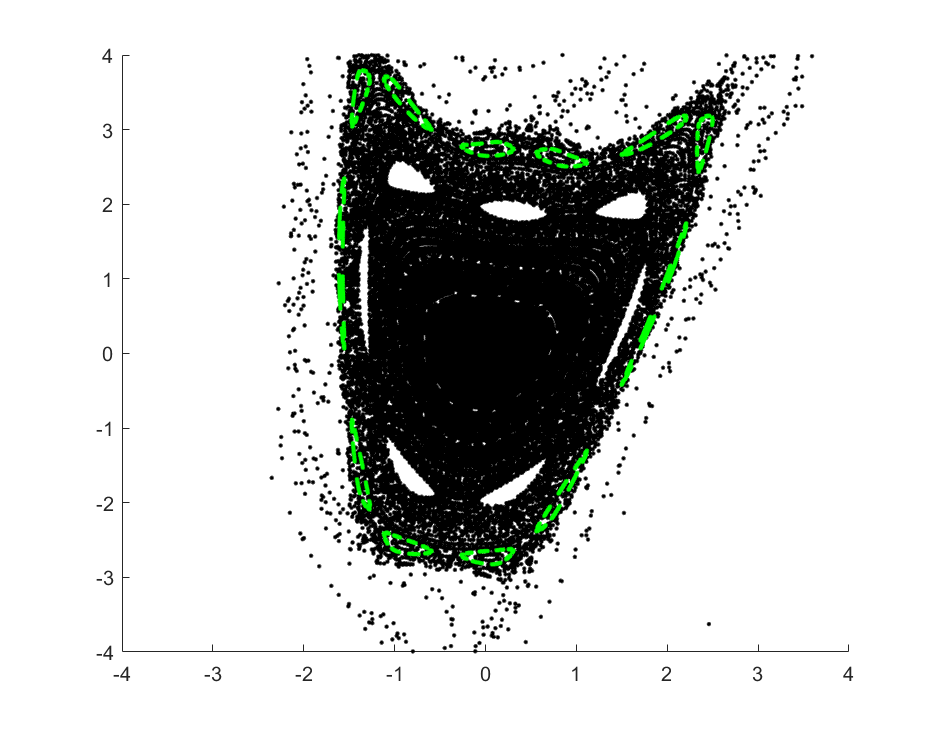} 
\includegraphics[width=.3\textwidth]{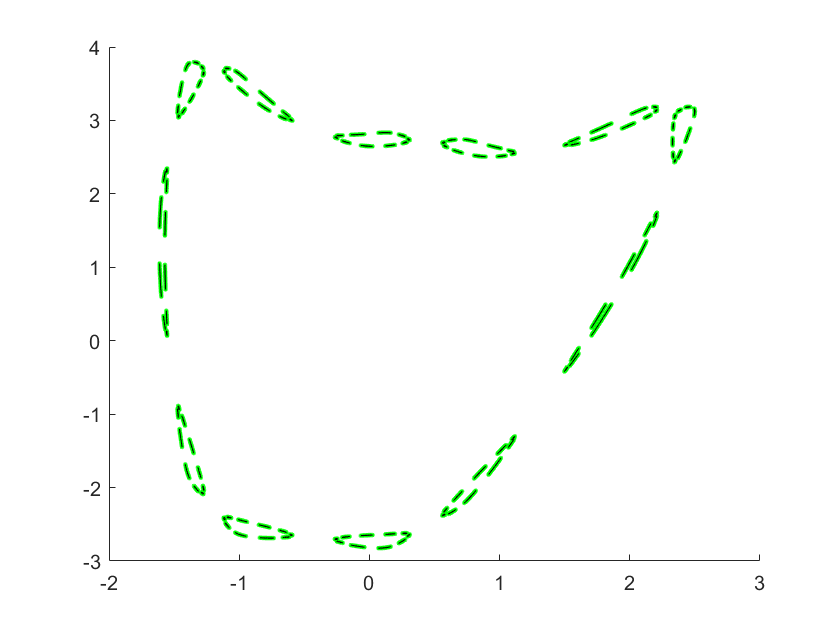} 
\includegraphics[width=.3\textwidth]{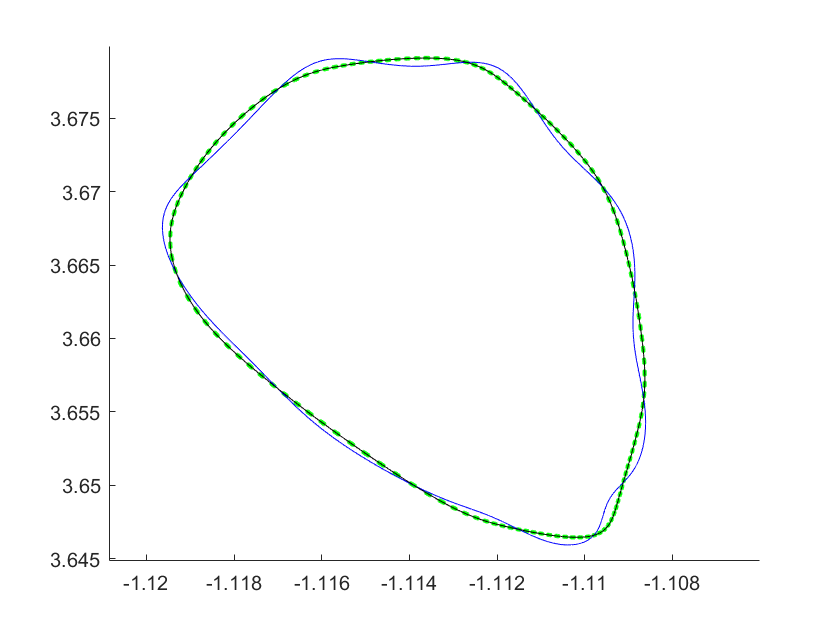}
\end{centering}
\caption{\textbf{A period 120 system of quasiperiodic invariant circles:}
left frame is the phase space simulation and orbit data.  Middle frame 
illustrated the images of Fourier parameterizations.  Right frame 
is a close up of the initial and final parameterizations of a single component 
circle.  
} \label{fig:per120}
\end{figure}

The phase space for the area preserving Henon when $\alpha = \arccos(-0.95)$
is illustrated in the left frame of Figure \ref{fig:per120}, and there is the 
suggestion of even longer systems of invariant circles.  For example, 
repeating the procedure discussed in the proceeding section
using the initial condition $(x_0,y_0) =  (0,-2.65)$ leads to the period 120 system of quasiperiodic 
invariant circles illustrated in the left frame of Figure \ref{fig:per120}.  
The Fourier mapping $\mathcal{F}_{120}$ generalizes from $\mathcal{F}_5$ 
in the obvious way.  Fortunately, the individual circles are not terribly 
complicated harmonically, and $15$ modes per circle appears to be enough to 
approximate the Fourier expansions well. 
Again, the Newton method converges with and 
we obtain the parameterizations $K_1, \ldots, K_{120}$ whose
images are illustrated in the center frame of Figure \ref{fig:per120}.
The right frame illustrates the initial and final parameterizations at a 
zoom in on one of the 120 components.

\begin{figure}[t!]
\centering
\includegraphics[scale=0.6]{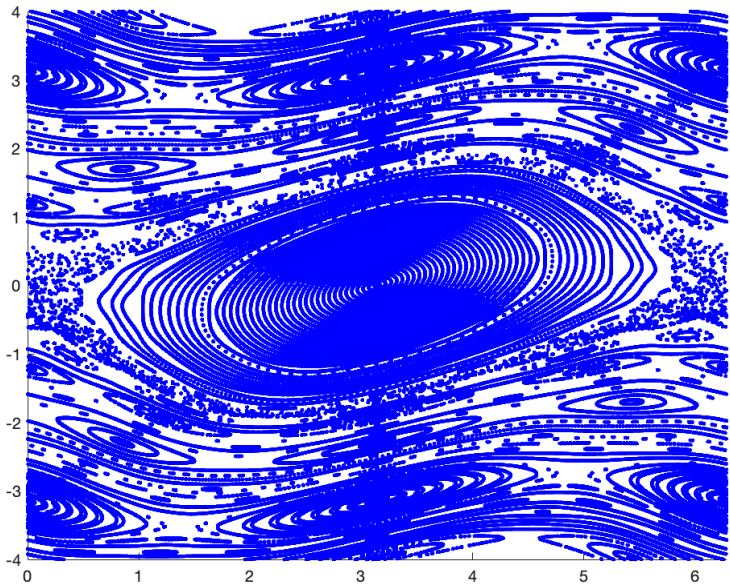} 
\caption{Phase space structure for the standard map
with $\alpha = \pi/4$:
in the simulation illustrated in this figure,
 we have taken the first component modulo 
$2 \pi$ so the line at $x = 0$ is identified with the line at 
$x = 2 pi$.  This has the effect of making the 
phase space into a cylinder, and there are
primary invariant tori which appear as smooth 
curves running from left to right.  In the present work
however, we study the secondary tori associated with the 
elliptic fixed point at $(\pi, 0)$.  These tori are visible
wether or not we compute modulo $2pi$.  
} \label{fig:stdPhase}
\end{figure}

\subsection{Computations for the Standard Map} \label{sec:standardMap}

For an example of a map with non-polynomial nonlinearity, 
consider the Standard Map of \cite{Chi79}. 
Since we are interested in secondary (contractable) invariant 
tori, we treat the map as a diffeomorphism 
$F: \mathbb{R}^2 \to \mathbb{R}^2$ given by the formula,
\begin{equation} \label{eq:defStdMap}
F(x,y) = \left(
\begin{array}{c}
x + y + \alpha \sin(x) \\
y + \alpha \sin(x)
\end{array}
\right).
\end{equation}
That is, we only take results modulo $2\pi$ in the first component
of the map to produce graphical results.

One subtle question is weather to consider the phase space as
$\mathbb{R}^2$ or $\mathbb{T} \times \mathbb{R}$.  In the 
later case, we take the first component of $F$ defined in Equation 
\eqref{eq:defStdMap} modulo $2 \pi$, forcing a periodicity in $x$.
A phase space simulation is illustrated in Figure \ref{fig:stdPhase} 
for a large value of $\alpha$.  Note that while there are many 
primary invariant circles (curves which wind around the cylinder
in a non-trivial) visible in this simulation, the main feature in 
is that resonance zone near the elliptic fixed point at
$(\pi, 0)$.  We remark that the secondary invariant circles
about this fixed point (which are contractible on the cylinder)
remain invariant even if we take the phase space to be 
$\mathbb{R}^2$.  The non-contractible invariant 
circles are the focus of this article, as they
are in some sense more difficult to compute.
This is because they cannot be treated using the skew 
product formulation, where a non-contractible 
invariant circle is written as the graph of a periodic function
(1d computations).


%
%
%

\subsubsection{Period 1 Standard Map}

Taking $\alpha = \pi/4$, we consider the orbit of the 
point $P = (\pi, 1)$.  Simulations suggest that the 
orbit is dense in an invariant circle, and we 
proceed as in the example of the period one 
computation for the area preserving Henon map 
discussed in Section  \ref{sec:per1_henon},
implementing the numerical recipe discussed in Section 
\ref{sec:numericalRecipe}.  
Computing with $12,000$ data points, 
we find the rotation number to be
$\rho \approx 0.871221766629878$. (Here there is 
a difference of 5.551115e-16 compared to the rotation number computed with
 11000 points, and we trust roughly 15 if the 16 computed digits). 
 
Truncating the parameterization to 
$N_0 = 20$ Fourier modes, computing with only 
a length 100 orbit segment
yields an approximate parameterization with initial defect of roughly $0.1288$.
Beginning with this as an initial approximation, the Newton method 
(truncated at $N = 50$ modes) converges
to the solution illustrated in Figure \ref{fig:per1_stdMap}.
The conjugacy error of the final approximation is on the order of machine epsilon.

%

\begin{figure}[t!]
\begin{centering}
\includegraphics[width=.9\textwidth]{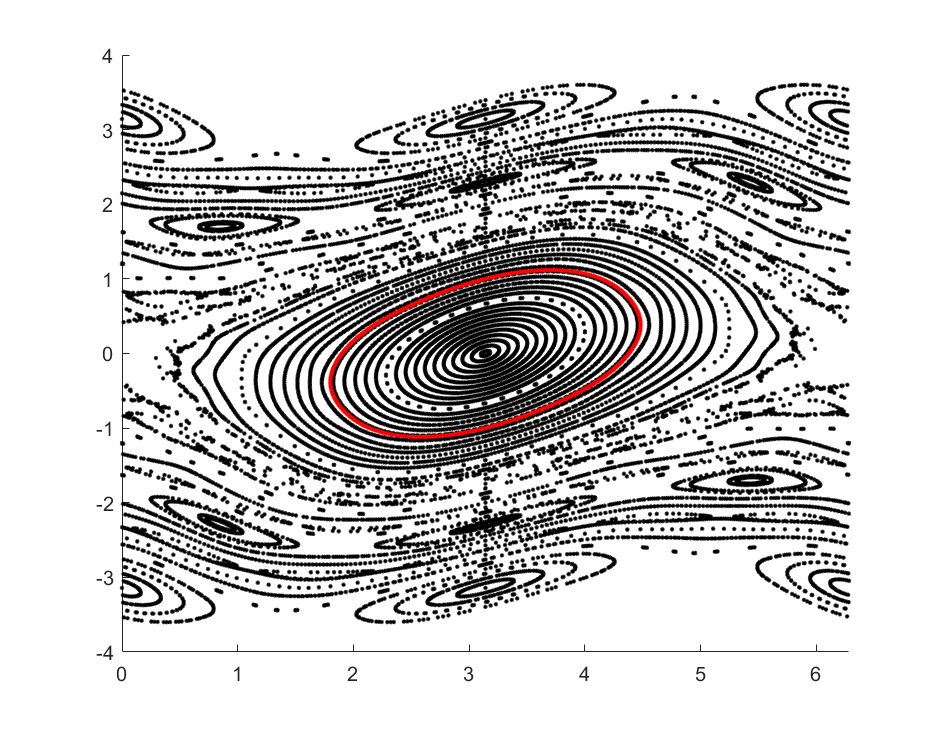} 
\end{centering}
\caption{\textbf{A period 1 quasiperiodic invariant circle for the standard map: }
phase space simulation and the converged parameterization of a
quasiperiodic invariant circle with rotation number 
$\rho \approx 0.871221766629878$ compute using $50$ Fourier modes.  
} \label{fig:per1_stdMap}
\end{figure}

Taking $\alpha = \pi / 2$ and initial points $P = [1.85, 0.565]$, 
and $P = [4.8155, 0.5]$, 
we compute the Fourier parameterizations of
period $6$ and period $24$ quasiperiodic systems
invariant circles using the ideas described in Section \ref{sec:per5}.
These results are illustrated in Figures 
\ref{fig:per6_stdMap} and \ref{fig:per24_stdMap}, and show that 
the multiple shooting parameterization method works 
also for non-polynomial nonlinearities.

\begin{figure}[t!]
\begin{centering}
\includegraphics[width=.45\textwidth]{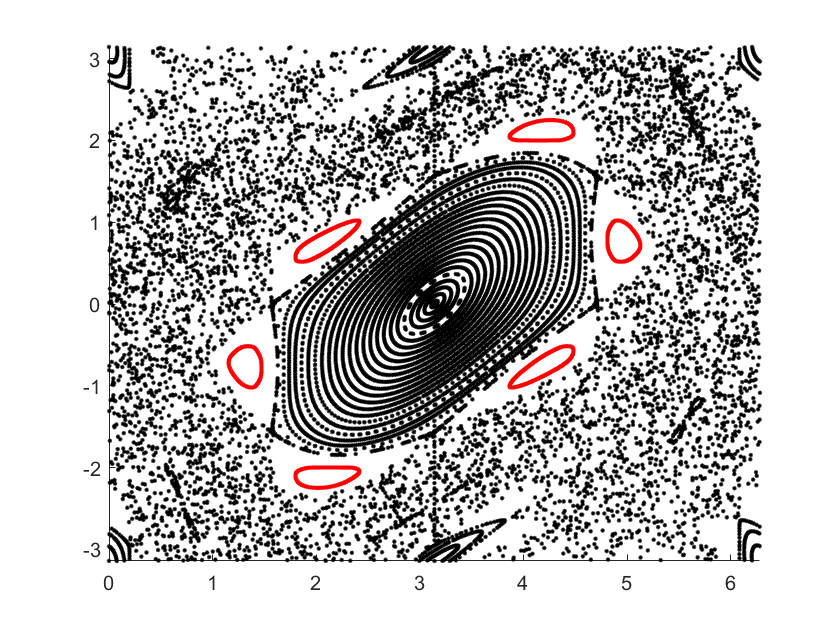}
\includegraphics[width=.45\textwidth]{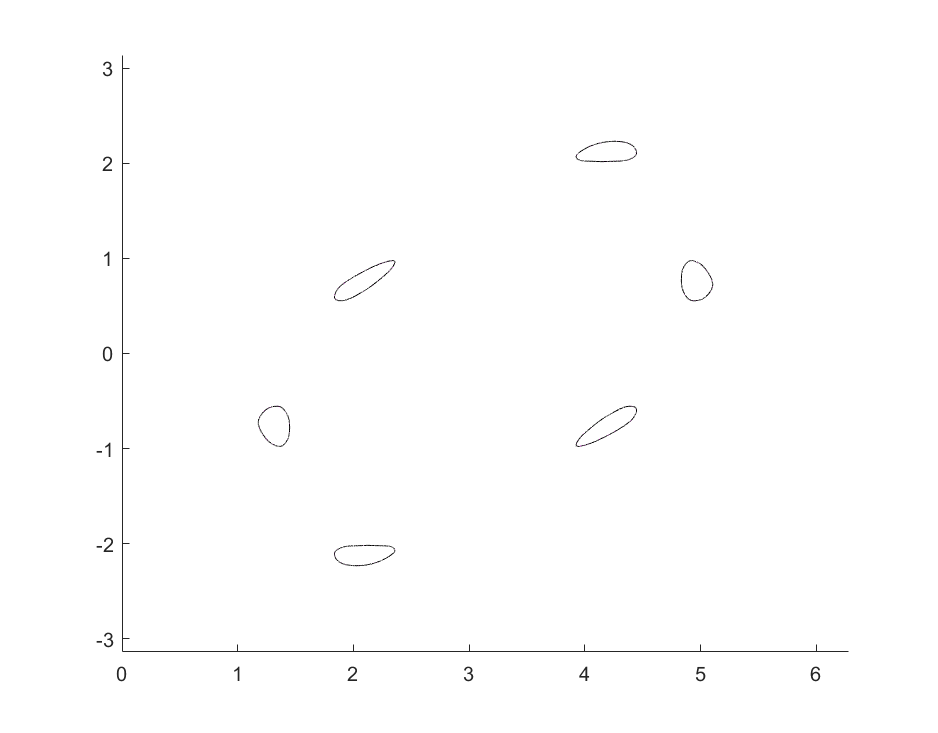}
\end{centering}
\caption{\mbox{A period 6 quasiperiodic system of invariant circles for the standard map: }
left figure is a phase space simulation and orbit segment data.  Right frame 
illustrates the image of the converged Fourier approximation.
} \label{fig:per6_stdMap}
\end{figure}


%
%

\begin{figure}[t!]
\begin{centering}
\includegraphics[width=.45\textwidth]{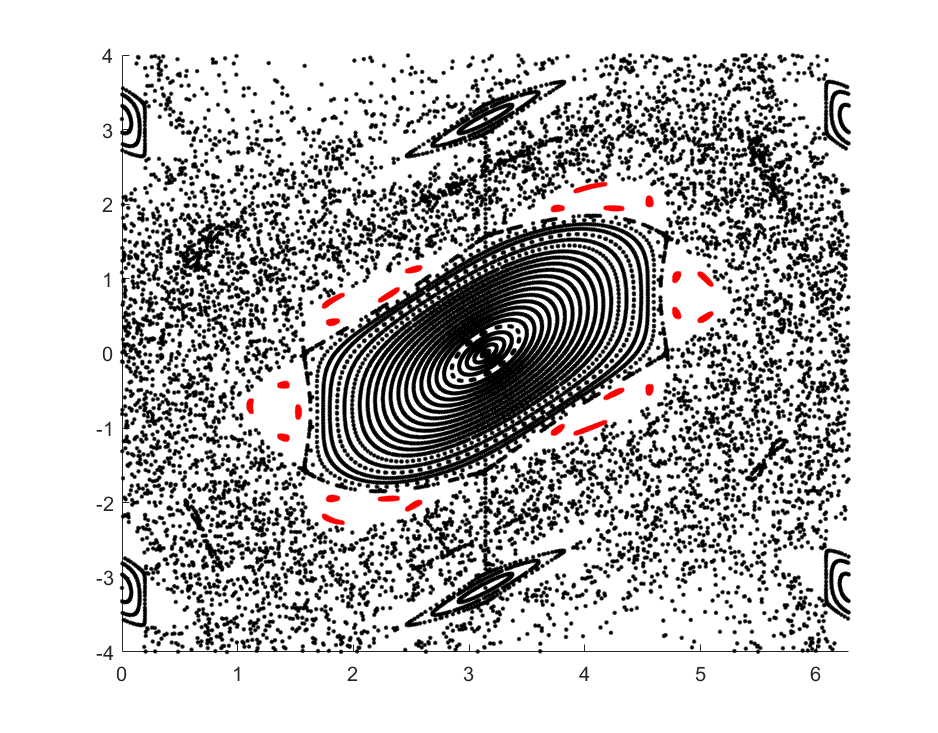} 
\includegraphics[width=.45\textwidth]{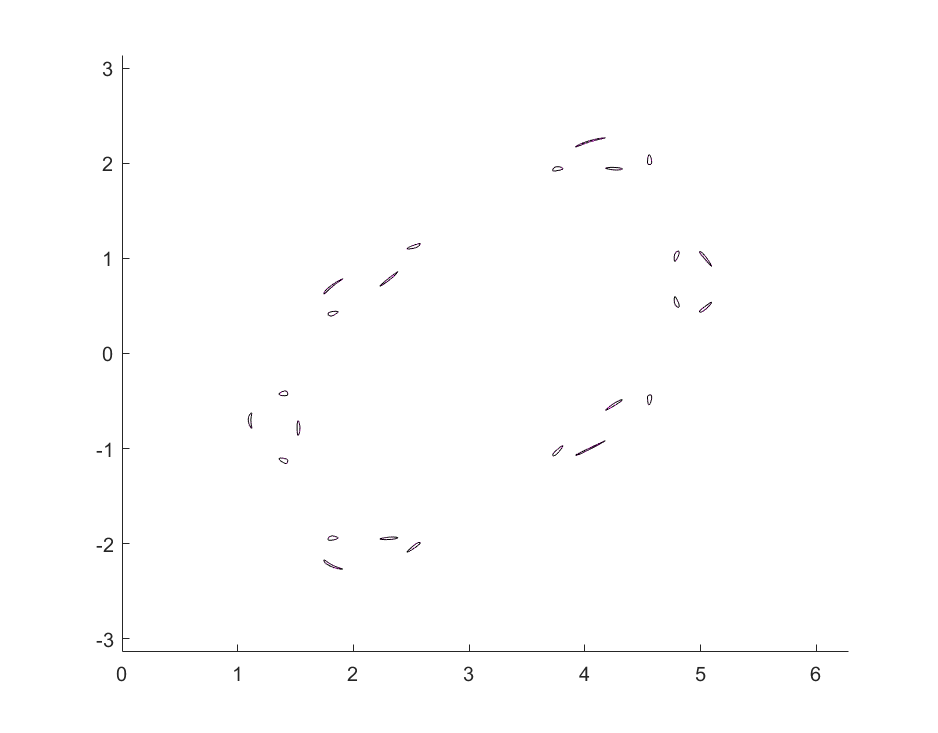} 
\end{centering}
\caption{\textbf{A period 24 quasiperiodic system of invariant circles for the standard map:}
same left and right as the previous figure.  
} \label{fig:per24_stdMap}
\end{figure}

We cap off this overview of the higher period standard map examples with the observation that the method described produces robust results with small sequence space error. 
However, the conjugacy error is not so easily controlled, while small, it has so far proved intractable to make arbitrarily so.

\subsubsection{Polynomial embedding of non-polynomial nonlinearities} \label{sec:polynonmialEmbedding}
In this section we include a few remarks about the implementation details for the 
nonlinearity in the standard map.  Indeed, suppose that $f$ is a period-$1$ function given by 
\[
f(\theta) = \sum_{n \in \mathbb{Z}} a_n e^{2 \pi i \theta}, 
\]
and that we want to compute
the Fourier coefficients of the composition 
\[
\sin(f(\theta)) = \sum_{n \in \mathbb{Z}} a_n e^{2 \pi i \theta}.
\]
One approach (perhaps the most natural) is to employ the FFT, and
if  $f$ is an arbitrary band-limited function 
and $g$ is smooth, then this in general provides 
the best known method for computing the Fourier coefficients
of $g \circ f$. In the present setting however, the functions being 
composed have additional structure.  They are
solutions of certain polynomial functional equations
and, by appending these equations to the parameterization method, we obtain 
a new functional equation whose nonlinearity is only polynomial (in fact quadratic).  
This avoids the overhead of implementing the FFT, and more importantly overcomes the 
``numerical stagnation'' of the coefficient decay of the composition 
at 10 or 20 multiples of Machine precision -- as is often observed 
when interpolation based methods for evaluating 
spectral coefficients are used.

The technique described here is 
give different names in different communities, for example  
automatic differentiation \cite{mamotreto,MR3545977},
polynomial embedding \cite{MR4413373,MR4292532}, and 
quadratic recast \cite{MR3973675,Cochelin2008AHP} to name only a few.
We refer to \cite{MR2194171} and also to Chapter 4.7 of 
\cite{MR633878} for a more thorough discussion of the 
history of these ideas, going back to the 19th Century.  
We explain the idea for a period one invariant circle of the standard map.

Consider an invariant circle parameterized by 
\[
K(\theta) = \left(
\begin{array}{c}
K_1(\theta) \\
K_2(\theta)
\end{array}
\right) = 
\sum_{n \in \mathbb{Z}} \left(
\begin{array}{c}
a_n \\
b_n 
\end{array}
\right) e^{2 \pi i \theta}
\]
which passes through the $x$-axis when 
$\theta = 0$.
Then an appropriate phase condition is $K_2(0) = 0$, and we
seek a zero of the operator 
\begin{equation} \label{eq:stdMapFE}
\Psi(a,b, \beta) = \left(
\begin{array}{c} 
\sum_{n \in \mathbb{Z}} b_n \\ 
a + b + \alpha S(a) - (1+\beta) R_\rho a \\ 
b + \alpha S(a) - (1 + \beta) R_{\rho} b
\end{array}
\right).
\end{equation}
Here $a = \{a_n\}, b = \{b_n\}$ are the Fourier coefficient sequences, 
$R_\rho$ is the diagonal operator defined in coefficient space 
in Equation \eqref{eq:def_Rrho_operator}, and $S$ denotes the map 
in coefficient space from $a$ to the Fourier coefficients of $\sin(K_1(\theta))$.
Let $C$ denote the complimentary function which maps the Fourier coefficient 
sequence $a$ to the Fourier coefficients of the function $\cos(K_1(\theta))$.  

We write $S = S(K_1(\theta))$ and $C = C(K_1(\theta))$ to denote the values 
of $S$ and $C$ at $K_1$.  
Note that $S,C$ have 
\[
\frac{d}{d \theta} S(K_1(\theta)) =  S'(K_1(\theta)) K_1'(\theta) = C K_1'(\theta),
\]
and 
\[
\frac{d}{d \theta} C(K_1(\theta)) = C'(K_1(\theta)) K_1'(\theta) = - S K_1'(\theta),
\]
with initial conditions 
\[
S(0) = \sin\left(\sum_{n\in \mathbb{Z}} a_n \right), \quad \quad \mbox{and}\quad \quad 
C(0) = \cos\left(\sum_{n \in \mathbb{Z}} a_n \right).
\]
Let $s = \{s_n\}$, $c = \{c_n\}$ denote the Fourier coefficient sequences of $S$, and $C$, and 
define the diagonal differentiation operator
\[
D(a)_n = 2 \pi i n a_n,  \quad \quad \quad n \in \mathbb{Z}.
\]  

Now suppose that $a,b,c,s,\beta, \gamma, \omega$ is a zero of the operator 
\begin{equation} \label{eq:stdMap_quadratic}
\Psi(a,b,c,s,\beta, \gamma, \omega) = 
\left(
\begin{array}{c}
\sum_{n \in \mathbb{Z}} b_n\\
\sum_{n \in \mathbb{Z}} s_n  - \sin\left(
\sum_{n \in \mathbb{Z}} a_n
\right) \\
\sum_{n \in \mathbb{Z}} c_n  - \cos\left(
\sum_{n \in \mathbb{Z}} a_n
\right) \\
a + b + \alpha s - (1+\beta) R_\rho a \\ 
b + \alpha s - (1 + \beta) R_{\rho} b \\
Ds - c *  D a - \gamma s + \omega c \\
Dc + c * Da - \gamma c - \omega s
\end{array}
\right).
\end{equation}
It can be shown (using an argument similar to the proof of Lemma \ref{lem:unfolding})
that $\gamma$ and $\omega$ are unfolding parameters for the differential equations.
That is, if the initial conditions are satisfied 
(i.e. the second and third components of $\Psi$ are zero)
and if the sixth and seventh components are zero, then 
$\gamma = \omega = 0$.  In this case $s$, $c$ are the Fourier coefficient
sequences of $\sin(a), \cos(b)$ respectively.  It follows that $a, b, \beta$  solve 
Equation \eqref{eq:stdMapFE}.  It then follows from the 
area preserving property of the standard map that $\beta = 0$.
Then  $a, b$ are the Fourier coefficients of a parameterization of an invariant 
circle conjugate to irrational rotation $\rho$.   
We stress that $R_\rho$ and $D$ are diagonal linear operators in Fourier space
and that $*$ is just the discrete convolution.  The 
 operator defined in Equation
\eqref{eq:stdMap_quadratic} is then linear except in the last two components where 
there appear quadratic nonlinearities. This is like a kind of ``multiple shooting'' for 
unwrapping compositions, and it is easily extended to the functional equations
for periodic systems of invariant circles.

\section{Numerical continuation (discrete) for families of periodic  invariant circles}
\label{sec:continuation}
While the recipe given in Section \ref{sec:numericalRecipe} is non-perturbative, requiring only 
finite data sampled form an invariant circle, it is well known (from KAM theory) 
that quasiperiodic invariant circles for area preserving maps typically appear in 
Cantor sets of large measure. We refer the reader to any of the classic books/lecture notes of 
\cite{DLL01,Arr90,MR1995704,MR1979140,MR1792240}, and to their bibliographies
for much more complete references.
We only note that if $\rho$ is irrational (say Diophantine), then $h \rho$ is irrational
(and likely Diophantine) for rational not too small $h$.  
Suppose now that $\Gamma$ is a quasiperiodic invariant circle 
with rotation number $\rho$ and that $h$ is a rational number near $1$. 
Heuristically speaking, it is probable that there 
exists a nearby quasiperiodic invariant circle $\bar \Gamma$ with irrational 
rotation number $h \rho$. 

This suggests that, having found a parameterized invariant circle using the 
method of Section \ref{sec:numericalRecipe},
we perform a kind of (discrete) continuation in the parameter $\rho$.  
That is, suppose that $\rho_0 \in [0,1]$ and $K_0 \colon \mathbb{R} \to \mathbb{R}^2$
is one periodic with 
\[
F(K_0(\theta)) = K_0(\theta + \rho_0), \quad \quad \quad \theta \in [0,1].
\]
Then we take $\rho_1 = h \rho_0$ (with $h$ close to one) and use 
$K_0$ as the initial condition for a Newton method solving the equation 
\[
F(K(\theta)) = K(\theta + \rho_1).
\]
The equation just stated is of course solved using the Newton scheme 
described in Section \ref{sec:parmMethodInvCirc}.  Indeed, it is very likely that 
the new calculation can be performed with exactly the same phase condition.  
The continuation schemes applies also to period-$K$ systems of quasiperiodic 
invariant circles.

This kind of discrete continuation (we use the word ``discrete'' to stress that the 
family of invariant circles does not vary continuously with $\rho$) has been used 
many times in the past.  Indeed in \cite{MR2672635} the authors
show that, for planar symplectic maps, a precursor to ``breakdown'' or disappearance of a 
family of KAM tori is the blow-up of certain Sobolev norms associated with the 
parameterization.  

More precisely, a typical invariant circle in the family is actually analytic, so that there is 
a $\nu > 1$ so that 
\[
\| K \|_0 = \sum_{n \in \mathbb{Z}} \max(|a_n|, |b_n|) \nu^|n| < \infty.
\]
Defining the Sobolev norms 
\[
\| K\|_d^2 = \sum_{n \in \mathbb{Z}} (1 + d^2)^{|n|} \max(|a_n|^2, |b_n|^2)  < \infty,
\]
the main result of  \cite{MR2672635} can be summarized by saying that 
if $\Gamma_\infty$ is ``the last'' invariant torus in the Cantor family (the torus at which 
the family breaks down) then there is a $d$ so that the $d$-th Sobolev norm of 
the parameterization of $\Gamma_\infty$ is infinite.  This suggests choosing a 
$K \in \mathbb{N}$ (perhaps $K = 10$) and monitoring 
the Sobolev norms of the Fourier series coefficients for $1 \leq d \leq K$
during the numerical continuation.  If one begins to blow up we conclude that 
we are near the breakdown.  This can be used as an automatic stopping procedure 
for the continuation.

\begin{figure}[t!]
\begin{centering}
\includegraphics[width=.45\textwidth]{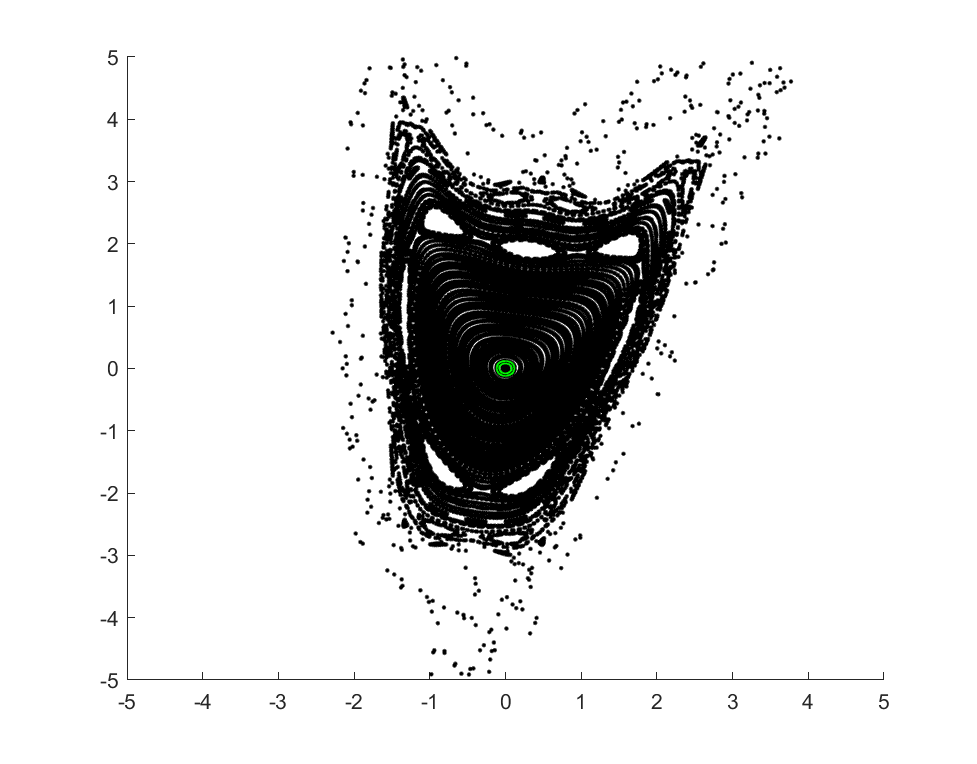}
\includegraphics[width=.45\textwidth]{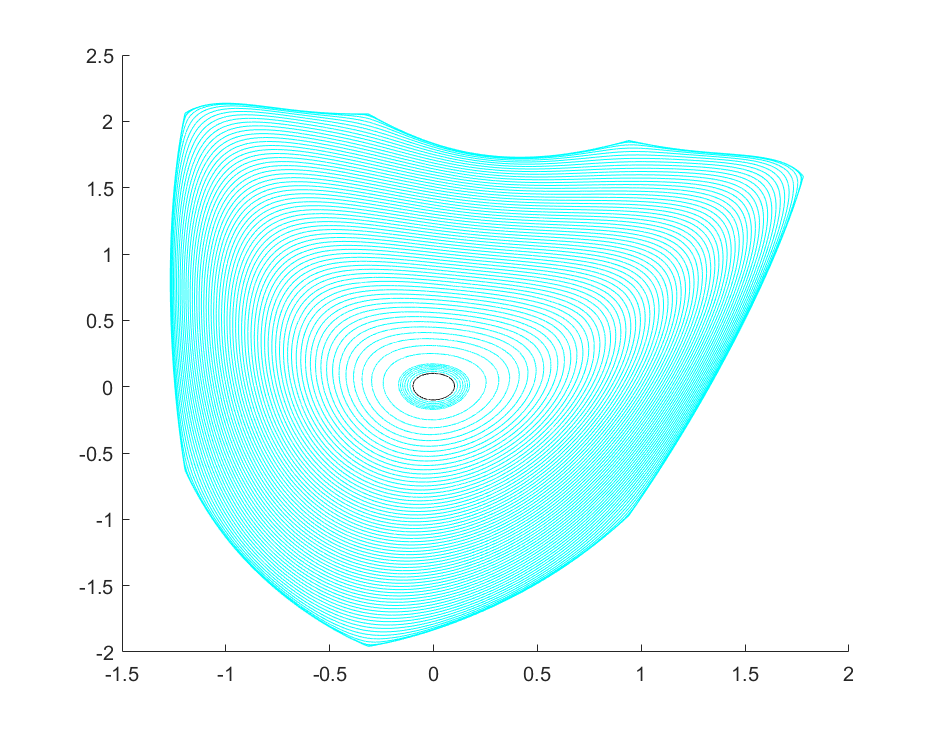}
\end{centering}
\caption{\textbf{Discrete continuation of a period one family for the area perserving
Henon map:} in the left frame we see the results of a phase space simulation and 
the initial small invariant circle. The right frame illustrates the results of the 
numerical continuation, making it clear that breakdown of the family 
involves loss of smoothness, as predicted by 
\cite{MR2672635}.  That is, the outermost invariant circles 
appear to have fairly sharp corners, indicating a blow up in the first 
Sobolev norm. 
} \label{fig:continuationPeriodOne}
\end{figure}

Consider for example a small circle, the orbit of $P = [0,0.1]$
in the area preserving Henon maps, and compute 
the parameterization and the rotation number 
$\rho \approx 0.550640092644521$.
We increment $\rho$ by $0.001$.  If the result converges we try again.  If not we increase the number 
of modes, and decrease the increment. 
In this way we computed 150 invariant circles in the family, and 
 finish with a final increment on the order of $10^{-13}$.
 At this point the Sobolev norms are large, and we terminate the continuation.
The results are illustrated in Figure \ref{fig:continuationPeriodOne}.


%
%
%

\begin{figure}[t!]
\begin{centering}
\includegraphics[width=.45\textwidth]{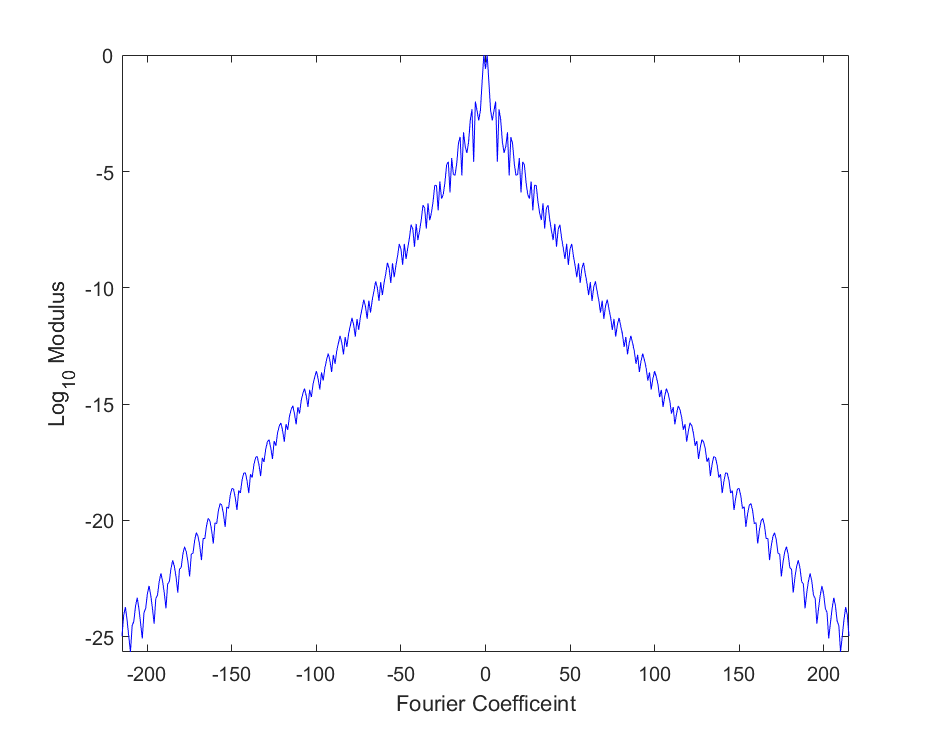}
\includegraphics[width=.45\textwidth]{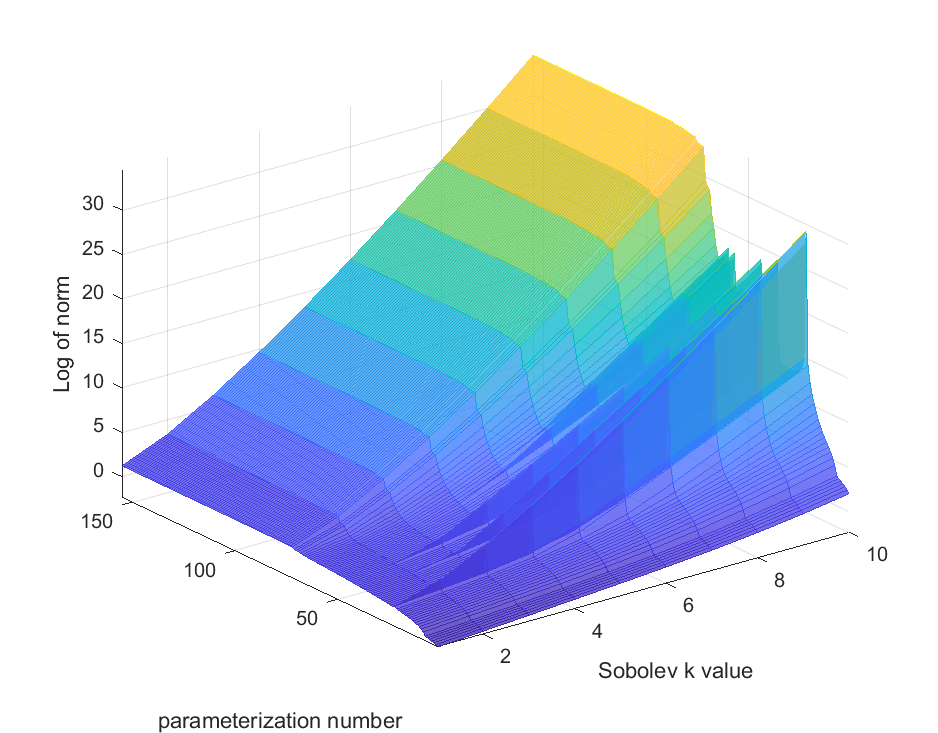}
\end{centering}
\caption{\textbf{Coefficient decay:} 
The left frame illustrates the decay of the coefficients 
for the last curve in our continuation computation, and we see
that even this circle has quite rapid decay.  However the right frame
illustrates the Sobolev norms calculated along the entire family, and 
we see that after about 50 continuation steps they are quite large.  
We note that for the remaining 100 steps, the step size is very small.
That is,  most of the progress is made in the first 50 steps.  
} \label{fig:decayNorms}
\end{figure}

\begin{figure}[t!]
\begin{centering}
\includegraphics[width=0.9\textwidth]{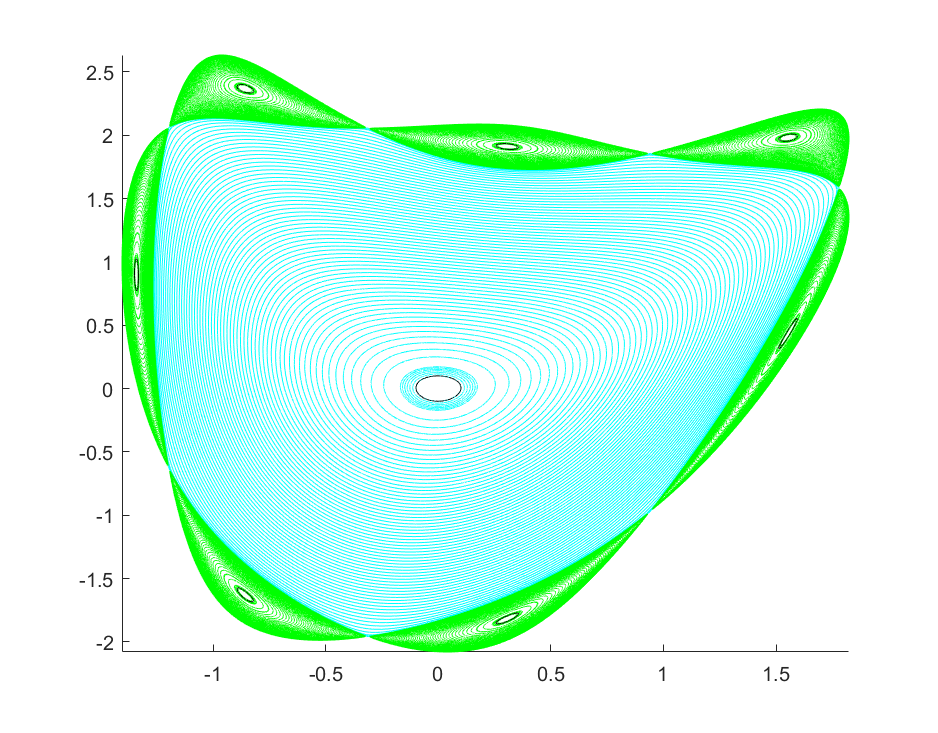}
\end{centering}
\caption{\textbf{Filling in the phase space:} after the period on family 
we restart the continuation on a period 7 family and continue until breakdown.
The union of the results yields a fairly large region of phase space covered
by the two Cantor sets.
} \label{fig:multipleFamilies}
\end{figure}

After the breakdown of the period one family we observe that there appears to be a period 
7 family of quasiperiodic circles.  We locate the period 7 orbit with a (finite dimensional)
Newton scheme, check the elliptic stability type, and look start our search for a period 
7 family nearby.  Once a single circle is found, we continue again until breakdown.
The result os a fairy large set of quasiperiodic motions.  
The results are illustrated in Figure \ref{fig:multipleFamilies}.
Continuing in this way, we find -- after the original period 1 and 
period 7 families -- a period 
1, 19, 1, 55, 1, 12, 120, and 17.
These results are illustrated in Figure \ref{fig:allFam}.

\begin{figure}[t!]
\begin{centering}
\includegraphics[width=.95\textwidth]{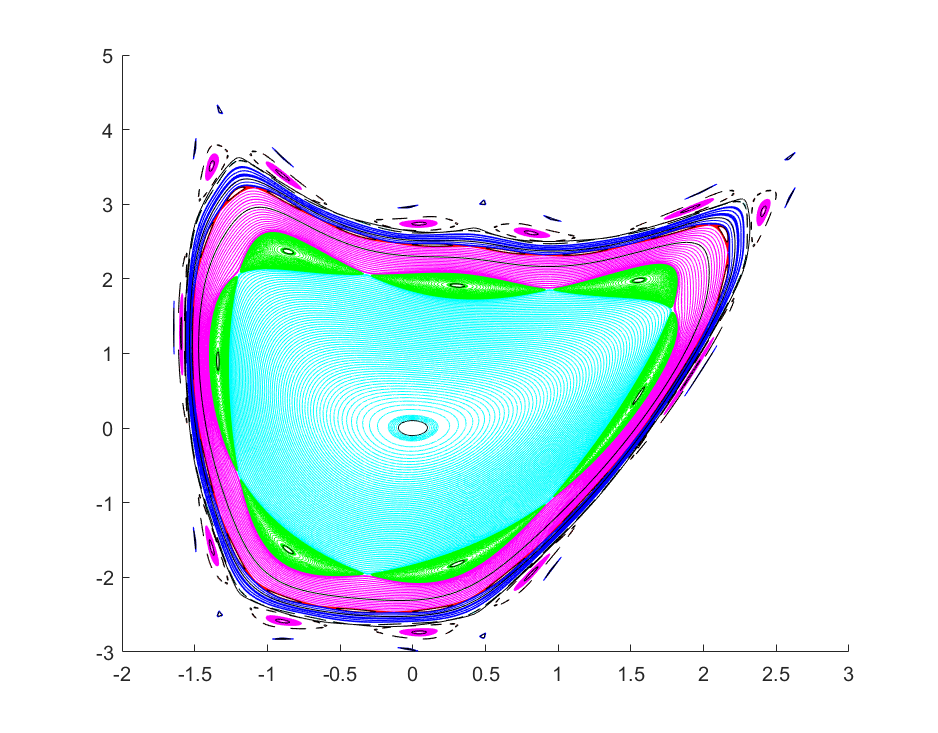}
\end{centering}
\caption{\textbf{Results of ten different continuations:} 
Note there are no phase space samples in this picture.  Only plots
of images of Fourier parameterizations.  In total --
starting from the origin and building out --
we see families of  
period 1 (teal), 7 (green), 1 (magenta), 19 (thin red band), 1 (blue), 
55 (thin bands just after blue), 1 (black), 12 (pink), 120 (black spots
around the pink family), and 17 (blue).  Some of the families are very 
thin.  We do not claim that there are no other tori in this region.   
} \label{fig:allFam}
\end{figure}

\section{Conclusions} 
\label{sec:conclusions}
The goal of this paper was to demonstrate that the method of weighted Birkhoff 
averages proves to be the perfect tool for initializing the parameterization 
method for invariant tori.  We have provided detailed example calculations 
illustrating the approach for classic polynomial and non-polynomial examples.
Moreover, we described and implemented a multiple shooting version of the 
parameterization method for simultaneously computing period-$d$ systems of 
invariant circles for $d$ as large as 120.  We also discuss a quadratic recast/automatic 
differentiation scheme which reduces the implementation of Newton scheme
to diagonal linear operators and discrete convolutions.  
We also introduced a global unfolding parameter for the parameterization method
which is built directly into the nonlinear conjugacy equation.
This avoids the need for introducing new parameters in the linear equations
at each step of the Newton method.  These ideas can be 
combined with basic numerical schemes to compute large sets of quasiperiodic 
motions.  Taken together, the approach described here provides a flexible
general toolkit for computing systems of invariant circles for area preserving maps.  

An natural future direction will be to extend the approach taken here for invariant 
2-tori in volume preserving maps.  For example combining the ergodic averages
for 2 tori used in \cite{MR4322369} with the parameterization method for 
volume preserving maps developed in \cite{MR3477312,MR3317251}.
We note for example that our unfolding parameter argument extends directly 
to this case. Extension to invariant tori in higher dimensional symplectic 
maps should be straight forward, but justifying the unfolding parameter
will require considering Calabi invariants.  The utility of Calabi invariants
in the parameterization method is discussed at length in 
\cite{MR4263036}.  Another valuable extension is to modify these ideas for 
application to parameterization of invariant tori for Hamiltonian ODEs,
as discussed in \cite{MR4361879,MR4209693}.
Indeed, the idea of combining rapidly converging Birkhoff averages 
with Newton schemes for solving invariance equations is so natural 
it is clear there will be many additional extensions and applications.

%
%

\section{Acknowledgements} 
\label{sec:acknowledg}

The authors would like to thank Evelyn Sander
for illuminating conversations which inspired the present work.  
In particular, she explained the 
(then quite recent) results about weighted Birkhoff averages to the second Author during 
the 2014 AIMS Conference on Dynamical Systems and Applications in Madrid.  
The author's also thank 
Rafael de la Llave for a number of additional helpful discussions.
In particular, the idea of using the area preserving property of the dynamical system 
as a means to formulate an appropriate unfolding parameter for the parameterization 
method emerged during conversations with the second author during his 
visit to FAU in 2015.  Conversations with Alex Haro, and Jordi-Llu\'{i}s Figueras 
are also gratefully acknowledged.  
The work of the second author was partially supported by NSF grant  DMS-1813501
during some of the work on this project.

%
%

\bibliographystyle{alpha}
\bibliography{papers}

\end{document}